\begin{document}

\newtheorem{theorem}{Theorem}
\newtheorem{lemma}[theorem]{Lemma}
\newtheorem{algol}{Algorithm}
\newtheorem{cor}[theorem]{Corollary}
\newtheorem{prop}[theorem]{Proposition}

\newcommand{\comm}[1]{\marginpar{%
\vskip-\baselineskip 
\raggedright\footnotesize
\itshape\hrule\smallskip#1\par\smallskip\hrule}}

\def\cA{{\mathcal A}}
\def\cB{{\mathcal B}}
\def\cC{{\mathcal C}}
\def\cD{{\mathcal D}}
\def\cE{{\mathcal E}}
\def\cF{{\mathcal F}}
\def\cG{{\mathcal G}}
\def\cH{{\mathcal H}}
\def\cI{{\mathcal I}}
\def\cJ{{\mathcal J}}
\def\cK{{\mathcal K}}
\def\cL{{\mathcal L}}
\def\cM{{\mathcal M}}
\def\cN{{\mathcal N}}
\def\cO{{\mathcal O}}
\def\cP{{\mathcal P}}
\def\cQ{{\mathcal Q}}
\def\cR{{\mathcal R}}
\def\cS{{\mathcal S}}
\def\cT{{\mathcal T}}
\def\cU{{\mathcal U}}
\def\cV{{\mathcal V}}
\def\cW{{\mathcal W}}
\def\cX{{\mathcal X}}
\def\cY{{\mathcal Y}}
\def\cZ{{\mathcal Z}}

\def\C{\mathbb{C}}
\def\F{\mathbb{F}}
\def\K{\mathbb{K}}
\def\Z{\mathbb{Z}}
\def\R{\mathbb{R}}
\def\Q{\mathbb{Q}}
\def\N{\mathbb{N}}
\def\M{\textsf{M}}

\def\({\left(}
\def\){\right)}
\def\[{\left[}
\def\]{\right]}
\def\<{\langle}
\def\>{\rangle}

\def\e{e}

\def\eq{\e_q}
\def\fS{{\mathfrak S}}

\def\lcm{{\mathrm{lcm}}\,}

\def\fl#1{\left\lfloor#1\right\rfloor}
\def\rf#1{\left\lceil#1\right\rceil}
\def\mand{\qquad\mbox{and}\qquad}

\def\jt{\widetilde\jmath}
\def\ellmax{\ell_{\rm max}}
\def\llog{\log\log}
\def\disc{{\rm disc}}
\def\cl{{\rm cl}}
\def\Gal{{\rm Gal}}

%

\title[Elliptic Curves with a
Subgroup  of Prescribed Size]{Finding Elliptic Curves with 
a Subgroup of Prescribed Size}

\author{Igor E.~Shparlinski} 
\address{Department of Pure Mathematics, University of 
New South Wales, Sydney, NSW 2052, Australia}
\email{igor.shparlinski@unsw.edu.au}

\author{Andrew V. Sutherland}  
\address{Department of Mathematics, Massachusetts Institute of Technology, Cambridge, Massachusetts 02139, USA} 
\email{drew@math.mit.edu}

\subjclass{11G07, 11T06, 11Y16}
\keywords{elliptic curve, divisibility, smooth numbers, prime quadratic residues}

\begin{abstract} 
Assuming the Generalized Riemann Hypothesis, 
we design a deterministic algorithm
that, given a prime $p$ and positive integer 
$m=o(p^{1/2} (\log p)^{-4})$, 
outputs an  elliptic curve $E$ over the finite field $\F_p$ for which the
cardinality of $E(\F_p)$ is divisible by $m$.
The running time of the algorithm is $mp^{1/2+o(1)}$, and this leads
to more efficient constructions of rational
functions over $\F_p$ whose image is small relative to $p$.
We also give an unconditional version of the algorithm that works
for almost all primes $p$, and give a probabilistic algorithm with
subexponential time complexity.
\end{abstract}

\maketitle

\section{Introduction}

\subsection{Motivation} 

Let $\F_q$ denote the finite field with $q$ elements.
For an elliptic curve $E/\F_q$, we denote by $E(\F_q)$ the group of $\F_q$-rational points on $E$, which we recall is a finite abelian group; see~\cite{ACDFLNV,Gal,Silv} for background on elliptic curves and basic terminology. 
We wish to consider the problem of explicitly constructing
an elliptic curve $E/\F_q$ for which 
$$
\#E(\F_q) \equiv 0 \pmod m,
$$
for a given integer $m$. 

This problem naturally falls into the category of questions concerning the construction of
elliptic curves $E/\F_q$ for which $\#E(\F_q)$ has a prescribed arithmetic structure.
For example, motivated by cryptographic applications, many authors
have considered the problem of finding elliptic curves over finite fields for
which  $\#E(\F_q)$ is prime; 
see~\cite{ShpSuth} for an efficient probabilistic algorithm, conditional 
under the  Generalized Riemann Hypothesis (GRH).

A second motivation comes from one of the classical questions
of the theory of finite fields: constructing rational functions 
with  a small image set or, more generally, with many 
repeated values.  
It has been shown 
that results of this type are
of interest for certain cryptographic attacks; see~\cite{Cheon,ChKi1,ChKi2,Kim,KCL}, 
for example.
More precisely, for algorithms of~\cite{Cheon,ChKi1,ChKi2,Kim,KCL} 
it is important to have a polynomial or a rational function $f \in \F_q(X)$ of  prescribed degree (or with
the 
degree in a prescribed dyadic interval) such that the map $f: \F_q \to \F_q$ has many 
``collisions'', or, more formally, the equation 
$$
f(x) = f(y), \qquad x,y \in \F_q,
$$
has many off-diagonal solutions $x \ne y$. 

If $m$ divides $q-1$ then it is easy to see that the image of the
function $X^m$ has cardinality $(q-1)/m + 1$, which
is the best possible for a non-constant rational function.
If $m$ has large common divisors with both $q-1$ and $q+1$ then
{\it Dickson polynomials\/}~\cite{Brewer,Dickson1,Dickson2} of degree $m$ 
also have a reasonably small image. More precisely, 
it can be of order $q/m^{1/2}$ in the 
optimal case when $\gcd(m,q-1)$ and $\gcd(m,q+1)$ are both 
of order $m^{1/2}$, see~\cite[Theorem~9]{CGM}. But when $\gcd(m, q^2-1)=1$ neither of these constructions gives a function whose image is significantly smaller than $\F_q$. 
Rational functions with a small image set can also be constructed from {\it R{\'e}dei
functions\/}~\cite{Red},  but they require similar divisibility conditions.
Furthermore, there are several constructions
of small image polynomials in large degree extensions of finite fields, but they are usually
of degree divisible by a large power of the characteristic, 
and in any case these constructions do not work in primes fields, see~\cite{BoCo}
and references therein.

However, as observed by Cheon and Kim~\cite{ChKi2} (see also~\cite[Section~3.6]{Kim}), 
if $m$ divides $\#E(\F_q)$
then the $m$-division polynomials of an elliptic curve $E$ over $\F_q$ can be 
used to construct a suitable rational 
function $f \in \F_q(X)$. More precisely, this function is of degree  $\deg f \sim m^2$ and  maps 
  the set 
$$
\cX = \{x \in \F_q~:~ (x,y) \in E\ \text{for some}\ y \in \F_q\}
$$
of $x$-coordinates into a set  $f(\cX)$ of  cardinality  $\# f(\cX) \sim q/m$,
while $\# \cX \sim q/2$. This certainly guarantees a high number of collisions. 
A remarkable feature of this construction is that no arithmetic conditions on $q$ are required.

As a possible third motivation, we note that elliptic curves over $\F_q$ whose cardinalities are divisible by
a given integer $m$ that also divides $q-1$ play an important 
role in  the construction of Anbar and  Giulietti~\cite[Theorem~1]{AnGi}, which has applications to finite geometry and coding theory.

Here we consider the natural question of computationally efficient constructions of elliptic curves $E/\F_q$ with $\#E(\F_q)$ divisible by $m$ and design several algorithms to find such a curve.

\subsection{Notation}

Throughout the paper, the  implied constants in 
the `$O$' notation may depend, where obvious, on the real parameter $\varepsilon>0$ 
(and also on $\lambda$ in Lemma~\ref{lem:tau}), but 
are absolute otherwise.

Here we also use the `$\widetilde{O}_q$' notation to indicate that we are ignoring factors of the form $q^{o(1)}$.
That is,  for $A > 0$ we write  $\widetilde{O}_q(A)$ for a quantity bounded by $Aq^{o(1)}$.
Note that this deviates slightly from 
the more common convention that  $\widetilde{O}(A)$ indicates a 
a quantity bounded by $A (\log (A+1))^{O(1)}$.

\subsection{Naive approach}
 
 Probabilistically, for $m=o(q)$ one can easily find an elliptic curve $E/\F_q$ with $\#E(\F_q)$ divisible by $m$ in time $\widetilde{O}_q(m)$ by simply choosing curves at random.

For example, when $q$ is prime to $6$
we can simply choose random $a, b \in \F_q$ with $4a^3+27b^2\ne 0$, and then use Schoof's polynomial-time algorithm~\cite{Schoof} to determine the number of $\F_q$-rational points 
on the elliptic curve $E_{a,b}$ defined by the {\it Weierstrass equation\/}
\begin{equation}
\label{eq:Eab}
E_{a,b}:\quad Y^2  = X^3 + aX + b.
\end{equation}
If $m$ divides $\#E_{a,b}(\F_q)$ then we are done, and otherwise we may try again with another choice of $a$ and $b$.
Given that the distribution of $\#E_{a,b}(\F_q)$ over the central part of the \emph{Hasse interval}
$$
[q+1-2\sqrt{q},\ q+1+2\sqrt{q}]
$$
is not too far from uniform, we heuristically expect to find a suitable curve after $\widetilde{O}_q(m)$ such trials.

When $q$ is prime this approach can be made rigorous via
the result of Lenstra~\cite[Proposition~1.9]{Len} on the asymptotic uniformity of the number of 
Weierstrass equations that define isogenous elliptic curves;  see Lemma~\ref{lem:Len} below.
However, for large values of $m$, say $m\sim q^c$ for some $c\in (0,1)$, this algorithm is inefficient. 

Here we give more efficient solutions to a slightly modified problem.
Given some real $M\ge 1$, we seek a pair $(m,E)$ 
of an integer $m$ and 
a curve $E$ over $\F_q$ such that 
$$
\#E(\F_q) \equiv 0 \pmod m \mand m \in [M, 2M].
$$
Note that a naive approach to our modified problem involves 
\begin{itemize}
\item generating random  pairs $(a,b) \in \F_q^2$;
\item computing $\#E_{a,b}(\F_q)$;
\item factoring $\#E_{a,b}(\F_q)$;
\item checking whether  $\#E_{a,b}(\F_q)$ has a divisor $m \in [M, 2M]$.
\end{itemize}
Using probabilistic subexponential-time factoring algorithms such as those given in~\cite{LP,Pom},
this leads to an algorithm with the expected running 
time of the form $\exp\((\log q)^{1/2+o(1)}\)$. Note that 
for this approach to succeed one also has to show that 
there is a non-negligible proportion of integers~$N$ in the interval
$[q+1-\sqrt{q}, q+1-\sqrt{q}]$ (or some similar interval) 
that actually have a divisor $m\in [M,2M]$, and for which this divisor can be found efficiently.
If $N$ has many prime factors determining whether it has
such a divisor $m$ may be difficult.
This however can be achieved using an argument 
similar to that used in our proof of Theorem~\ref{thm:Card-M2M}
below. 

\subsection{Our results}

First, we use some ideas from~\cite{KoMeSh}, based on 
an algorithm of  Lenstra, Pila, and 
Pomerance~\cite{LPP1,LPP2} to show that  a more
efficient algorithm exists.  Although the ideas work for 
arbitrary finite fields, we limit ourselves to the case of 
prime $q=p$. In fact, the only missing ingredient to 
extend our result to arbitrary $q$ is a generalization of
Lenstra's result~\cite[Proposition~1.9]{Len} on the distribution of $\#E_{a,b}(\F_q)$
given in Lemma~\ref{lem:Len}
below. However, there is little doubt that this result
holds for all finite fields, so we at least
have a heuristic result in the general case.


\begin{theorem}\label{thm:Card-M2M} 
Fix any $\varepsilon> 0$.
There is a probabilistic algorithm that, given a prime $p>3$ and a real number $M$ for which $p^\varepsilon \le M \le p^{1/2 -\varepsilon}$, outputs an integer $m \in [M,2M]$ and an elliptic curve $E_{a,b}/\F_p$
for which $m \mid \#E(\F_p)$
in $\exp\((\log p)^{2/5+o(1)}\)$ expected time.
\end{theorem}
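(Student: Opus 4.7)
The plan is to build on the subexponential algorithm of Lenstra, Pila, and Pomerance~\cite{LPP1,LPP2}, in the form exploited in~\cite{KoMeSh}, combined with Lenstra's equidistribution estimate~\cite[Proposition~1.9]{Len} (Lemma~\ref{lem:Len}) which guarantees that random Weierstrass equations produce cardinalities $\#E_{a,b}(\F_p)$ distributed almost uniformly over the central part of the Hasse interval.

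First, a classical density result of Ford on integers with a divisor in a prescribed dyadic interval implies that at least a $(\log p)^{-O(1)}$-proportion of integers $N$ in the Hasse interval $[p+1-2\sqrt{p},\ p+1+2\sqrt{p}]$ admit a divisor $m \in [M,2M]$, using the hypothesis $p^\varepsilon \le M \le p^{1/2-\varepsilon}$ (in particular, that $M$ is bounded well away from the interval's width). Combined with Lemma~\ref{lem:Len}, this implies that a random $(a,b)\in\F_p^2$ yields, with probability at least $(\log p)^{-O(1)}$, a curve $E_{a,b}/\F_p$ whose order $N=\#E_{a,b}(\F_p)$---computed by Schoof's algorithm in time $(\log p)^{O(1)}$---is divisible by some $m \in [M, 2M]$. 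Hence only $(\log p)^{O(1)}$ random trials are needed on average; what remains is to exhibit the divisor $m$ from $N$.

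The divisor $m$ is extracted without fully factoring $N$: small prime factors of $N$ below a subexponential smoothness bound $B$ are peeled off using the elliptic curve factoring method, exactly in the spirit of~\cite{LPP1,LPP2,KoMeSh}, and subset products of the accumulated primes are tested for membership in $[M,2M]$. The main obstacle is the simultaneous optimisation of $B$ against (i) the ECM cost per prime factor, which is $\exp\(\sqrt{2\log B\,\log\log B}\,(1+o(1))\)$, and (ii) the density in the Hasse interval of integers $N$ possessing a divisor in $[M,2M]$ whose required prime factors all lie below $B$, or, more flexibly, admit a semi-smooth representation with one large cofactor, a density governed by Dickman-type estimates. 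Carrying out this balance in the framework of~\cite{LPP1,LPP2,KoMeSh} yields the expected running time $\exp\((\log p)^{2/5+o(1)}\)$; standard amplification over independent trials then drives the success probability to $1-o(1)$.
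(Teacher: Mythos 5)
Your high‑level plan is in the same spirit as the paper's: use Lenstra's equidistribution (Lemma~\ref{lem:Len}) together with the rigorous smoothness test of Lenstra--Pila--Pomerance, and balance the cost of the smoothness test against a Dickman‑type density. However, there are substantive gaps in the details.

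First, the opening appeal to Ford's theorem is a red herring and, taken at face value, is misleading. Ford's theorem controls the density of $N$ having \emph{some} divisor in $[M,2M]$, but the algorithm has no way to detect such a divisor unless it is $y$‑smooth for whatever subexponential smoothness bound $y$ you use; deciding whether an arbitrary $N$ has a divisor in a given interval is essentially as hard as factoring. The density that actually governs the trial count is the density of $N$ in the Hasse interval having a \emph{$y$‑smooth} divisor $m\in[M,2M]$, which is $u^{-u+o(u)}$ with $u=\log M/\log y$ (this is exactly Lemma~\ref{lem:SmoothInt}). That density is itself of size $\exp\big(-(\log p)^{2/5+o(1)}\big)$ for the optimal choice of $y$; it is nowhere near $(\log p)^{-O(1)}$, and the claim that ``only $(\log p)^{O(1)}$ random trials are needed'' is incorrect. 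To your credit, you pivot to the Dickman density in the final paragraph, but the two halves of your argument are inconsistent.

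Second, the per‑trial cost formula $\exp\big(\sqrt{2\log B\,\log\log B}\,(1+o(1))\big)$ you quote is the \emph{heuristic} complexity of ECM, not the rigorous complexity of the Lenstra--Pila--Pomerance test, which is $\exp\big((\log B)^{2/3+o(1)}\big)(\log n)^{O(1)}$ (Lemma~\ref{lem:SmoothFact}). This distinction changes the optimization: balancing $u^u$ against $\exp\big((\log B)^{2/3+o(1)}\big)$ gives $\log B\asymp (\log p)^{3/5}$ and a total cost $\exp\big((\log p)^{2/5+o(1)}\big)$, matching Theorem~\ref{thm:Card-M2M}; balancing $u^u$ against $\exp\big(\sqrt{\log B\,\log\log B}\big)$ instead gives $\log B\asymp (\log p)^{2/3}$ and total cost $\exp\big((\log p)^{1/3+o(1)}\big)$ --- this is precisely the \emph{heuristic} improvement the paper records separately in its final section, but it is not a proof of the theorem as stated. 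Finally, one also needs to bound the number of divisors $\tau(N)$ of the curve orders selected, since the ``subset products'' step enumerates $y$‑smooth divisors of $N$ and its cost can otherwise dominate the budget; the paper's Lemma~\ref{lem:SmoothInt} is carefully formulated to guarantee both the smooth divisor \emph{and} a bound $\tau(N)\le u^{u+o(u)}(\log p)^{2}$ for a positive proportion of good $N$, and your proposal omits this.
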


The tools used in the proof of Theorem~\ref{thm:Card-M2M}
allow us to replace the exponent $(\log p)^{2/5+o(1)}$ with a 
more precise expression involving explicit constants and double logarithms.
However, we avoid this in order to simplify the exposition and minimize
the technical details.

We also consider deterministic algorithms to solve the original problem of
constructing an elliptic curve $E/\F_p$ with $\#E(\F_p)$ divisible 
by a given integer $m$.
As a brute force method, one can modify the naive approach described above to simply
enumerate elliptic curves $E/\F_p$ (rather than generating random ones), computing $\#E(\F_p)$ in each case using Schoof's algorithm. 
But as explained in \S\ref{sec:brute} below, this yields an algorithm that 
runs in $\widetilde{O}_p(p)$ time.
Here we give a deterministic algorithm that, assuming the GRH, is more efficient than the brute force method when $m=o(p^{1/2} (\log p)^{-4})$.

We assume henceforth that $p$ always denotes a prime greater than 3.

\begin{theorem}\label{thm:Card-m GRH}
Assume the GRH.
There is a deterministic algorithm that,
given a prime $p$ and an integer $m = o(p^{1/2} (\log p)^{-4})$, 
outputs an elliptic curve $E_{a,b}/\F_p$ with $m\mid\#E(\F_p)$
in $\widetilde{O}_p(mp^{1/2})$ time. 
\end{theorem}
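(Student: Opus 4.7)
The plan is to enumerate Weierstrass pairs $(a,b)\in\F_p^2$ in a fixed order, test each for $m\mid\#E_{a,b}(\F_p)$ via Schoof's algorithm, and halt on the first success; GRH is invoked to bound the index of the first good pair in the enumeration.

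First I would apply Lemma~\ref{lem:Len} to establish a lower bound on the global density of good pairs. Summing Lenstra's per-trace estimate for the number of $(a,b)$ with trace $t$ over all traces $t\in[-2\sqrt{p},2\sqrt{p}]$ with $p+1-t\equiv 0\pmod{m}$, and using standard class-number averages, one obtains that at least $\gg p^2/m$ pairs $(a,b)$ satisfy $m\mid\#E_{a,b}(\F_p)$. The hypothesis $m=o(p^{1/2}(\log p)^{-4})$ is precisely what makes Lenstra's error terms negligible compared with the main term in this sum.

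The heart of the argument is to use GRH to convert this global density into an effective local statement: within the first $N=\widetilde{O}_p(mp^{1/2})$ pairs of the enumeration, at least one is good. A random pair would already succeed in $\widetilde{O}_p(m)$ trials from the global density $\gg 1/m$; the extra factor $p^{1/2}$ comes from the GRH-conditional error term in an equidistribution estimate. Concretely, one would express the indicator of divisibility using additive characters modulo $m$ and bound the resulting character sums over the prefix of length $N$ via GRH-conditional P{\'o}lya--Vinogradov-type bounds on $L$-functions attached to the family $\{E_{a,b}\}$. This is the main obstacle, as it requires uniform control over the distribution of traces of Frobenius as $(a,b)$ varies in a small box, rather than over all of $\F_p^2$.

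Per pair, the Schoof subroutine applied modulo $m$ (split into its prime-power factors) costs $\widetilde{O}_p(1)$ time, since the relevant division polynomials have degree $O(m^2)=o(p)$ and all polynomial arithmetic is absorbed into the $\widetilde{O}_p$-notation. Combining with the bound on the number of trials, the total running time is $\widetilde{O}_p(mp^{1/2})$, as claimed.
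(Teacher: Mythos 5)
Your approach is fundamentally different from the paper's, and it contains a critical gap that you yourself flag as ``the main obstacle'' but then do not resolve.

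The paper does \emph{not} search over Weierstrass pairs at all. It picks the trace of Frobenius $t$ directly and then builds a curve with that trace by the CM method. Concretely: it sets $V=p^{1/4}/(2m^{1/2})$, invokes Corollary~\ref{cor:QuadRes} (a GRH bound for Legendre-symbol sums) to deterministically locate a prime $v\in[V,2V]$ with $\bigl(\tfrac{p}{v}\bigr)=1$, Hensel-lifts a square root of $4p$ to a residue $s \pmod{v^2}$, and uses CRT to find $t\equiv s\pmod{v^2}$ and $t\equiv p+1\pmod m$ with $|t|\le 2\sqrt p$. Then $t^2-4p=u^2D$ with $u\ge v$, so $|D|\le 4p/V^2=\widetilde O_p(mp^{1/2})$; computing $H_D$ by Lemma~\ref{lem:HD} and finding a root by Lemma~\ref{lem:factor} yields $E_{a,b}$ with $\#E_{a,b}(\F_p)=p+1-t\equiv 0\pmod m$. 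The hypothesis $m=o(p^{1/2}(\log p)^{-4})$ is there precisely so that $V\ge C(\log p)^2$, the threshold in Corollary~\ref{cor:QuadRes}; GRH enters the argument only through that one character-sum estimate.

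Your enumeration strategy, by contrast, requires an effective equidistribution statement: that among the first $N=\widetilde O_p(mp^{1/2})$ pairs $(a,b)$ in some fixed ordering, at least one has $m\mid\#E_{a,b}(\F_p)$. Lemma~\ref{lem:Len} only controls the \emph{global} count over all of $\F_p^2$; passing to a short prefix (a small box of $(a,b)$) would need square-root cancellation in sums of $e(kt_{a,b}/m)$ over that box. No such bound is known, conditionally on GRH or otherwise; GRH for $L$-functions attached to individual curves $E_{a,b}$ does not aggregate into an estimate over a two-dimensional family of curves parametrised by a short box, and the ``Pólya--Vinogradov-type bound on the family'' you gesture at is not an available tool. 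So the step you identify as the heart of the argument is in fact a genuine gap, not something GRH delivers. (A minor secondary point: running Schoof ``modulo $m$'' for a large prime factor $\ell\mid m$ requires arithmetic with the $\ell$-division polynomial of degree $\Theta(\ell^2)$, which can be as large as $\widetilde O_p(m^2)$, not $\widetilde O_p(1)$; you should instead run full Schoof in $(\log p)^{O(1)}$ time and test $m\mid\#E_{a,b}(\F_p)$ afterwards, but this does not save the overall argument.)
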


Furthermore, there is an unconditional algorithm that achieves the same complexity for almost all primes $p$.

\begin{theorem}\label{thm:Card-m Aver}
Let $T>1$ denote a real number and $m$ a positive integer.
For all but $O(m T^{1/2}\log T)$ primes $p \in [T,2T]$ the algorithm of Theorem~\ref{thm:Card-m GRH} outputs an elliptic curve $E_{a,b}/\F_p$ for which $m\mid\#E_{a,b}(\F_p)$
in $\widetilde{O}_p(mp^{1/2})$ time.
\end{theorem}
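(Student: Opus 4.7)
The plan is to run the deterministic algorithm of Theorem~\ref{thm:Card-m GRH} verbatim, and to show that the single use of GRH in its running-time analysis can be circumvented unconditionally for all but a sparse exceptional set of primes $p$ in $[T,2T]$. Since the algorithm itself is unchanged, only its success needs to be recertified; its correctness and the $\widetilde{O}_p(mp^{1/2})$ time bound transfer automatically once we know the algorithm's internal search terminates within the polylogarithmic budget guaranteed by GRH.

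First I would isolate the step in the proof of Theorem~\ref{thm:Card-m GRH} at which GRH is applied. In arguments of this flavour, and consistent with the keyword ``prime quadratic residues'' listed for the paper, GRH is invoked to certify that a search for a small auxiliary integer $d$ (typically a fundamental discriminant, or a small prime, of size polylogarithmic in $p$) satisfying a quadratic-residue condition linking $d$ to $p$ terminates rapidly. Once such a witness $d$ is produced, the remainder of the algorithm---whether it proceeds via the CM method, a reduction to Lemma~\ref{lem:Len}, or a direct enumeration over traces $t \equiv p+1 \pmod{m}$ with $|t| \le 2\sqrt{p}$---runs unconditionally within the claimed time.

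Next I would bound the set $\cP(T)$ of ``bad'' primes $p \in [T,2T]$ for which the polylogarithmic search fails. Let $d_1, d_2, \ldots$ be the ordered list of candidate witnesses the algorithm inspects, truncated at the polylogarithmic cutoff $D$ specified by the GRH-based analysis. A prime $p$ belongs to $\cP(T)$ only if, for \emph{every} such $d_i$ and for each of the $O(m)$ admissible residue classes of the desired trace $t$ modulo $m$, the quadratic condition on $p$ fails; this confines $p$ to a fixed union of residue classes modulo $4\prod_{|d_i| \le D} |d_i|$. Counting primes of $[T,2T]$ in each such class by the trivial bound $O(T^{1/2})$---or, when the modulus is below $T^{1/2}$, by Brun--Titchmarsh together with the P\'olya--Vinogradov estimate for the quadratic characters involved---and summing over the $O(m \log T)$ pairs (admissible residue modulo $m$, candidate witness) yields the announced bound $O(m T^{1/2} \log T)$.

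The main obstacle I anticipate is securing the square-root saving $T^{1/2}$ in $|\cP(T)|$: merely knowing that each individual $d_i$ eliminates a positive density of primes is not enough, since the intersection of these failure conditions could \emph{a priori} still leave on the order of $T/\log T$ primes. The actual saving comes from the fact that by the time the algorithm has exhausted the polylogarithmically many candidates that GRH would allow, the joint modulus of the quadratic conditions grows past $T^{1/2}$, at which point one prime per residue class in $[T,2T]$ becomes the sharp estimate. Managing the interplay between this counting and the $O(m)$ constraint modulo $m$, so that exactly one factor of $m$---rather than $m^2$---appears in the final bound, is the only real technical subtlety.
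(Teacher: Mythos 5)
You have the right meta-plan: run the algorithm of Theorem~\ref{thm:Card-m GRH} verbatim and replace the single GRH step (finding a prime $v$ with $\left(\frac{p}{v}\right)=1$) by an unconditional on-average statement, discarding the primes $p$ for which that step fails. But two concrete details are off. First, the witness $v$ is not polylogarithmic: the algorithm sets $V = p^{1/4}/(2m^{1/2})$ and searches $[V,4V]$; GRH is used (via Corollary~\ref{cor:QuadRes}) only to certify that a suitable $v$ exists in that range, not to shrink $V$ to polylogarithmic size. The choice of $V$ is forced by the constraint $mv^2 \lesssim p^{1/2}$ (so that an admissible trace exists in the Hasse interval and the CM step runs in $\widetilde{O}_p(mp^{1/2})$ time), and the $m$-dependence of the exceptional-set bound enters through this choice of $V$ --- not through ``$O(m)$ admissible residue classes of the trace modulo $m$.'' Once $p$ is fixed, the trace must satisfy $t\equiv p+1\pmod{m}$, a single class; there is nothing to sum over.

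Second, the counting of bad primes is the real content, and your proposed mechanism does not deliver it. The paper invokes Lemma~\ref{lem:QuadRes Aver}: for $T>2V>2$, all but $O(TV^{-1}\log V+V\log V)$ primes $p\in[T,2T]$ admit a prime $v\in[V,2V]$ with $\left(\frac{p}{v}\right)=1$. Its proof is a mean-square argument: set $W=\sum_{p\in\cP}\sum_{v\in\cV}\left(\frac{p}{v}\right)$ over the bad primes $\cP$ and primes $\cV\subset[V,2V]$, note $W=-\#\cP\,\#\cV$, apply Cauchy--Schwarz, expand, and bound the off-diagonal sums $\sum_{k\in[T,2T]}\left(\frac{k}{v_1v_2}\right)$ by P\'olya--Vinogradov. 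Your alternative --- confining bad $p$ to residue classes modulo $\prod_i|d_i|$ and counting by Brun--Titchmarsh or one-prime-per-class once the joint modulus exceeds $T^{1/2}$ --- does not give the saving: for each $v$ roughly half of the residue classes modulo $v$ remain admissible, so a bad $p$ is confined to an enormous union of classes, and the heuristic $2^{-\#\cV}$-type gain you are implicitly relying on has no rigorous substitute in your sketch. Supplying a mean-square argument along the lines of Lemma~\ref{lem:QuadRes Aver} is the essential missing ingredient.
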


We note that Theorem~\ref{thm:Card-m Aver} is only interesting for $m=o(T^{1/2}/(\log T)^{2})$, since otherwise every prime $p\in[T,2T]$ may be excluded.

\section{Preparations}
\label{sec:prep}

\subsection{Isomorphism and isogeny classes of elliptic curves}
Let us fix an algebraic closure $\overline{\F}_p$ of $\F_p$.
The $\overline{\F}_p$-isomorphism class of the elliptic curve $E_{a,b}$
defined in~\eqref{eq:Eab} is uniquely determined by its $j$-\emph{invariant}
$$
j(E_{a,b}) :=  1728\frac{4a^3}{4a^3+27b^2}; 
$$
see~\cite{Silv}. 
Moreover, every $j\in\F_p$ is the $j$-invariant of some $E_{a,b}/\F_p$;
for $j\not\in \{0,1728\}$ we may take
$$
a=3j(1728-j)\quad\text{and}\quad b=2j(1728-j)^2,
$$
and for $j=0$ (resp.\ 1728) we use $a=0, b=1$ (resp.\ $a=1,b=0$).

Each $\overline{\F}_p$-isomorphism class of elliptic curves over $\F_p$ may be decomposed into a a finite number of $\F_p$-isomorphism classes.

For $j\not\in \{0,1728\}$ there are exactly two $\F_p$-isomorphism classes in the $\overline{\F}_p$-isomorphism class determined by~$j$, and they are {\it quadratic  twists\/} (meaning that they are isomorphic over $\F_{p^2}$).
For $j(E_{a,b})\not\in\{0,1728\}$ and $d\in \F_p^\times\backslash\F_p^{\times 2}$, if we set $\widetilde{a}=d^2a$ and $\widetilde{b}=d^3b$, then $E_{a,b}$ and $E_{\widetilde{a},\widetilde{b}}$ represent the two $\F_p$-isomorphism class with $j$-invariant $j(E_{a,b})=j(E_{\widetilde{a},\widetilde{b}})$.

Provided that $a\in\F_p$ is not a quadratic or cubic residue, the set
$\{E_{a^n,0}~:~n\in\Z/6\Z\}$ contains representatives for all the $\F_p$-isomorphism classes of elliptic curves with $j$-invariant 0; these $\F_p$-isomorphism classes need not be distinct, it depends on the residue class of $p\bmod 12$, but there are at most 6 of them.
Similarly, if $b\in\F_p$ is not a quadratic residue, then $\{E_{0,b^n}~:~n\in\Z/4\Z\}$ contains representatives for all the $\F_p$-isomorphism classes of elliptic curves with $j$-invariant 1728, of which there are at most 4.

It is easy to find $d\in \F_p^\times\backslash \F_p^{\times 2}$ probabilistically by applying Euler's criterion $d^{(p-1)/2}\equiv -1\bmod p$ to randomly chosen $d\in \F_p$, but one can obtain such a $d$ deterministically by simply enumerating $d\in[1,p-1]$ in order.
Under the GRH this takes $\widetilde{O}_p(1)$ time; 
the famous result of Burgess~\cite{Burg1} gives the unconditional bound $\widetilde{O}_p(p^{1/(4\sqrt{e})})$.


By a well-known theorem of Hasse, the number of $\F_p$-rational points on an elliptic curve $E/\F_p$ is of the form $p+1-t$, where $t$ is an integer with absolute value at most $2\sqrt{p}$ equal to the {\it trace of Frobenius\/}.
By a theorem of Tate, elliptic curves over a finite field have the same trace of Frobenius if and only if they are isogenous.
Thus the Hasse bound implies that there are just $O(\sqrt{p})$ distinct isogeny classes of elliptic curves over $\F_p$.

\subsection{Brute force approach}\label{sec:brute}

The most straight-forward way to construct $E/\F_p$ with $\#E(\F_p)$ divisible by $m$ is to simply enumerate pairs $(a,b)\in\F_p^2$ with $4a^2+27b^3\ne 0$ and compute $\#E_{a,b}(\F_p)$ using Schoof's algorithm~\cite{Schoof}. 
This yields an algorithm that runs in $\widetilde{O}_p(p^2)$ time, but if we instead enumerate $\F_p$-isomorphism classes, of which there are only $2p+O(1)$, we obtain an $\widetilde{O}_p(p)$ bound.
This is accomplished by enumerating $j$-invariants $j\in\F_p$ and then enumerating representatives of the (at most 6) distinct $\F_p$-isomorphism classes with the same $j$-invariant.

It is natural to suggest that an even better approach 
is possible via the  enumeration of isogeny classes, 
of which there are just $O(\sqrt{p})$.
Unfortunately we do not know an efficient way to enumerate representatives of these isogeny classes.
However, the alternative approach we propose in \S\ref{sec:alt} is able to achieve an $\widetilde{O}_p(\sqrt{p})$ running time.
In essence, we choose an isogeny class by choosing a trace of Frobenius $t\in[-2\sqrt{p},\ 2\sqrt{p}]$ for which $m$ divides $p+1-t$ and for which we can efficiently construct a representative curve $E_{a,b}/\F_p$; here we rely on the \emph{CM method}
 for constructing elliptic curves over finite fields.

\subsection{Constructing elliptic curves with the CM method} 
The theory of complex multiplication (CM) provides a standard method for constructing elliptic curves over finite fields whose group of rational points has a prescribed trace of Frobenius $t$ (and hence a prescribed number of rational points), which we now briefly recall; we refer the reader to~\cite{Cox} for additional background.

Suppose $E/\F_p$ is an elliptic curve over $\F_p$ with $\#E(\F_p)=p+1-t$,
and assume $p>3$.
If $t$ is nonzero then $E$ is an \emph{ordinary} elliptic curve, and its endomorphism ring is isomorphic to an order~$\cO$ in the ring of integers $\cO_K$ of the imaginary quadratic field $K=\Q(\sqrt{t^2-4p})$.
The elliptic curve $E$ is said to have \emph{complex multiplication} (CM) by the order $\cO$.
The prime~$p$ and the integer $t$ necessarily satisfy the \emph{norm equation}
$$
4p = t^2-v^2D,
$$
where $D$ is the discriminant of the order $\cO$.
There is a one-to-one correspondence between the set of $\overline{\F}_p$-isomorphism classes of elliptic curves $E/\F_p$ with CM by~$\cO$ and elements of the ideal class group $\cl(\cO)$; the cardinality of both sets is equal to the class number $h(D)$.

By the main theorem of complex multiplication, the ideal class group $\cl(\cO)$ is isomorphic to the Galois group $\Gal(K_\cO/K)$, where $K_\cO$ denotes the \emph{ring class field} of the order $\cO$.
The field extension $K_\cO/K$ can be explicitly constructed as $K_\cO = K(j)$, where $j$ denotes the $j$-invariant of an elliptic curve $E/\C$ with CM by $\cO$.
The minimal polynomial of $j$ over $K$ is the \emph{Hilbert class polynomial} $H_D(X)$; its degree is necessarily equal to the class number $h(D)$ and, remarkably, its coefficients lie in $\Z$ (not just in $\cO_K$).
Every root of $H_D(X)$ is the $j$-invariant of an elliptic curve $E/\C$ with CM by~$\cO$, and every elliptic curve over $\C$ with CM by~$\cO$ arises in this way.

The Deuring lifting theorem~\cite[Theorems~13.12-14]{Lang} implies that if $p$ is a prime that splits completely in $K_\cO$, equivalently, a prime satisfying the norm equation $4p=t^2-v^2D$ for some integers $t$ and $v$, then this correspondence also holds over $\F_p$.
The polynomial $H_D\in \Z[X]$ then splits completely into linear factors over $\F_p$, and its roots are precisely the $j$-invariants of the elliptic curves $E/\F_p$ that have CM by $\cO$, all of which have trace of Frobenius~$t$ and $p+1-t$ rational points.
We note that not every curve with trace of Frobenius~$t$ has CM by $\cO$, but every such curve has CM by an order in the ring of integers of the field $K=\Q(\sqrt{t^2-4p})$, and this field is uniquely determined by $p$ and $t$.
In practice one typically takes $D$ to be the discriminant of $K$ so that $\cO=\cO_K$ is the maximal order, since this minimizes $|D|$ for a given $p$ and $t$.

Thus given an integer $t$ and a prime $p$ for which $4p=t^2-v^2D$, we can construct an elliptic curve $E/\F_p$ with $\#E(\F_p)=p+1-t$ by first computing the  Hilbert class polynomial $H_D(X)$ and then finding a root $j$ of $H_D\bmod p$.
The root $j$ determines the $\overline{\F}_p$-isomorphism class of an elliptic curve $E$, and we can distinguish its $\F_p$ isomorphism class (and an explicit equation $E_{a,b}$) by checking which of a finite set of representatives $E_{a,b}$ with $j(E_{a,b})=j$ has the desired trace of Frobenius~$t$ (there are at most 6 possibilities to consider, and for $D<-4$, only $2$).
This can be done by simply computing $p+1-\#E_{a,b}(\F_p)$, but see~\cite{RS} for a more efficient method.

This method of constructing elliptic curves $E/\F_p$ with a prescribed trace of Frobenius is known as the \emph{CM method}.
Its key limitation is that when $|D|$ is large it may be infeasible to explicitly compute $H_D(X)$; the degree of $H_D$ is the class number $h(D)$, which is bounded by $O(|D|^{1/2}\log|D|)$,
see~\cite{Schur}, and the logarithm of the absolute value of its largest coefficient is $O(|D|^{1/2}(\log|D|)^2)$, see~\cite[Lemma 8]{Sut2}.
Thus the total size of $H_D$ is $O(|D|(\log|D|)^3)$ bits.
Under the GRH one can improve the logarithmic factors in all of these bounds, but in any case the best bound we have on the total size of $H_D(X)$ is $|D|^{1+o(1)}$ bits, and one heuristically expects a lower bound of the same form.
As a practical matter, the largest value of $|D|$ for which $H_D(X)$ has been explicitly computed is on the order of $10^{13}$, see~\cite{Sut2}, although there are more sophisticated methods that have made it feasible to apply the CM method to discriminants with $|D|$ as large as $10^{16}$; 
see~\cite{ES,Sut3}.

For the purposes of constructing a deterministic algorithm, we restrict ourselves to the complex analytic method of~\cite{Enge}, which is not as fast as the probabilistic algorithms used to achieve these results, but is able to achieve a time complexity of $|D|^{1+o(1)}$ without relying on randomization (or assuming the GRH); see Lemma~\ref{lem:HD}.

\subsection{An alternative approach}\label{sec:alt}

We now sketch an alternative approach to constructing an elliptic curve $E/\F_p$ with $E(\F_p)$ divisible by~$m$, using the CM method.
We enumerate isogeny classes of elliptic curves over $\F_p$ according to their trace of Frobenius $t$, and once we have found $t$ such that $p+1-t$ is divisible by $m$, we may apply the CM method to construct an elliptic curve $E/\F_p$ with trace $t$.
The time to construct $E$ with the CM method is $\widetilde{O}_p(|D|)$, where $D$ is the discriminant of the imaginary quadratic field $\Q(\sqrt{t^2-4p})$.
So long as~$m$ is not too large, there are many possible choices for $t$; in order to minimize the running time we want to choose $t$ so that $t^2-4p$ has a large square divisor, which allows us to make $|D|$ smaller.

Thus we are faced with finding an integer 
$t \in [-2\sqrt{p},\ 2\sqrt{p}]$ 
 such that $p+1-t \equiv 0 \pmod m$ and  $t^2-4p$ has a 
large square divisor~$v^2$.
Then the discriminant $D=\disc\ \Q(\sqrt{t^2-4p})=\disc\ \Q(\sqrt{(t^2-4p)/v^2})$ is relatively small in absolute 
value, allowing the Hilbert class polynomial $H_D(X)$ to be computed more quickly than in the typical case. 
In order to construct a curve in the isogeny 
class defined by $t$ we also need to find a root of $H_D(X)$, 
which has degree $h(D)=\widetilde{O}_p(|D|^{1/2})$.
This can be done in time $\widetilde{O}_p(p^{1/2}+h(D))$ using the deterministic algorithm of~\cite{BKS}; see Lemma~\ref{lem:factor}.

If $v^2$ is the largest square factor of $t^2-4p$, then the discriminant of $\Q(\sqrt{(t^2-4p)})$ is either $D=(t^2-4p)/v^2$ or $D=4(t^2-4p)/v^2$;
the latter case occurs only when $v$ is divisible by $2$, so after removing a factor of $2$ from $v$ if necessary, we may assume $t^2-4p=v^2D$.
This implies $v \mid (t^2-4p$),
and for prime $v$ this means that $p$ must be a quadratic 
residue modulo $v$. So the algorithm starts by selecting
an appropriate prime~$v$; since we require $v$ to lie
in a certain interval, this is precisely where the GRH comes 
into play. 

We now present concrete technical details. 

\section{Some Background from Analytic Number Theory}

\subsection{Bounds of character sums}

Let $\Lambda(v)$ denote the usual von~Mangoldt function defined by
$$
\Lambda(v):=
\begin{cases}
\log \ell &\quad\text{if $v$ is a power of the prime $\ell$,} \\
0&\quad\text{if $v$ is not a prime power.}
\end{cases}
$$

We start with the following bound on sums of Legendre symbols, 
which can be found in~\cite[Equation~(13.19)]{Mont}.

\begin{lemma} 
\label{lem:CharSum} Assume the GRH. For any real $U\ge 1$, we have
$$
\sum_{v \le U} \(1 - \frac{v}{U}\)\Lambda(v) \(\frac{p}{v}\) = O(U^{1/2} \log p).
$$
\end{lemma}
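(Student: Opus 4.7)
The plan is to recognize $(p/v)$ as the value of a primitive real Dirichlet character $\chi$ of conductor $q \le 4p$ (so that $\log q = \log p + O(1)$), and then apply a smoothed explicit formula for $L(s,\chi)$ together with the GRH. By quadratic reciprocity, such a $\chi$ exists and is non-principal, so $L(s,\chi)$ is entire. The values of $(p/v)$ for prime powers $v=2^k$, where the Kronecker extension may differ in convention from the symbol in the statement, contribute only $O(\log U)$ and can be absorbed into the error term.

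First I would apply a Mellin--Perron identity. Since the kernel $U^s/(s(s+1))$ is the Mellin transform of the weight $(1 - t/U)\mathbf{1}_{[0,U]}(t)$, and $-L'(s,\chi)/L(s,\chi) = \sum_{v \ge 1}\Lambda(v)\chi(v) v^{-s}$ converges absolutely for $\Re(s) > 1$, one obtains
\begin{equation*}
S(U) := \sum_{v \le U}\Bigl(1 - \frac{v}{U}\Bigr)\Lambda(v)\chi(v) = -\frac{1}{2\pi i}\int_{2-i\infty}^{2+i\infty}\frac{L'(s,\chi)}{L(s,\chi)}\,\frac{U^s}{s(s+1)}\,ds.
\end{equation*}
I would then shift the contour to $\Re(s) = -1/2$, choosing horizontal segments at heights free of zeros of $L(s,\chi)$. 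The residues encountered come from (i) the non-trivial zeros $\rho$ of $L(s,\chi)$, each contributing $-U^\rho/(\rho(\rho+1))$; (ii) the trivial zero at $s=0$ (if $\chi$ is even), contributing $O(1)$; and (iii) the simple poles of the kernel at $s=0,-1$, whose residues are $O(\log p)$ via the standard bounds on $L'(s,\chi)/L(s,\chi)$ at these points.

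Under the GRH every non-trivial zero satisfies $\Re(\rho) = 1/2$, so $|U^\rho| = U^{1/2}$ and
\begin{equation*}
\Bigl|\sum_{\rho} \frac{U^\rho}{\rho(\rho+1)}\Bigr| \le U^{1/2}\sum_{\rho} \frac{1}{|\rho(\rho+1)|} = O\bigl(U^{1/2}\log p\bigr),
\end{equation*}
where the last estimate follows from the Riemann--von~Mangoldt-type formula $N(T,\chi) = O(T\log(pT))$ via a dyadic decomposition over $T = 2^k$. The integral along the shifted line $\Re(s) = -1/2$ is then controlled via the functional equation relating $L(s,\chi)$ and $L(1-s,\bar\chi)$, which converts it into an integral of the same shape and contributes at most $O(U^{1/2}\log p)$. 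Summing the three types of contributions gives the stated bound.

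The main technical obstacle is the routine but delicate step of controlling $L'(s,\chi)/L(s,\chi)$ on the vertical and horizontal segments needed to justify the contour shift; one must select the horizontal heights to avoid zeros and use the standard convexity-type bounds in vertical strips. All of this is entirely standard for Dirichlet $L$-functions, and the resulting estimate is precisely~\cite[Equation~(13.19)]{Mont}.
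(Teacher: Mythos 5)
Your proof is correct and follows the standard smoothed-explicit-formula argument (Mellin transform with kernel $U^s/(s(s+1))$, contour shift to $\Re(s)=-1/2$, GRH to put the zeros on the critical line, and the zero-density estimate $N(T,\chi)=O(T\log(pT))$), which is precisely how the cited result~\cite[Eq.~(13.19)]{Mont} is established; the paper itself simply quotes Montgomery and supplies no proof. One small bookkeeping remark: when $\chi$ is even the trivial zero at $s=0$ coincides with the pole of the kernel, so the two contributions you list separately as (ii) and (iii) are really a single double-pole residue of size $O(\log U+\log p)$, but since $\log U\le U^{1/2}$ this is still absorbed into the stated bound.
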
 

Note that  the sum in Lemma~\ref{lem:CharSum} 
differs slightly from the traditional sum with the Legendre symbols
$\(v/p\)$. However, it is easy to see that $\(p/v\)$ is 
multiplicative character modulo $4p$.

\begin{cor} 
\label{cor:QuadRes} Assume the GRH. There is an
absolute constant $C> 0$ such that for 
every  prime $p$ and  real $V \ge C (\log p)^2$
there exists a prime $v \in [V, 4V]$ with 
$$
\(\frac{p}{v}\) = 1. 
$$
\end{cor}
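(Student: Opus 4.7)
The plan is to apply Lemma~\ref{lem:CharSum} at three scales $V,\ 2V,\ 4V$ and form a linear combination of the resulting $\Psi$-sums that (i) still satisfies the GRH-bound coming from the lemma, and (ii) isolates a \emph{non-negatively weighted} sum over the interval $(V,4V]$. A standard PNT lower bound then forces the existence of a prime in that interval on which $(p/v)=+1$.

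Write $\Psi(U):=\sum_{v\le U}(1-v/U)\Lambda(v)(p/v)$, so $|\Psi(U)|\le c_0 U^{1/2}\log p$ by the lemma. Set
$$
L := 4V\,\Psi(4V) - 6V\,\Psi(2V) + 2V\,\Psi(V),
$$
so that $L=O(V^{3/2}\log p)$. Expanding $U\Psi(U)=\sum_{v\le U}(U-v)\Lambda(v)(p/v)$, the coefficients $(1,-3,2)$ in $L$ are exactly those for which the contribution of any $v\le V$ (where all three $\Psi$-sums see the term) cancels: $(4V-v)-3(2V-v)+2(V-v)=0$. For $V<v\le 2V$ only $\Psi(4V)$ and $\Psi(2V)$ contribute, giving coefficient $2(v-V)$; for $2V<v\le 4V$ only $\Psi(4V)$ contributes, giving coefficient $4V-v$. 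Thus
$$
L=\sum_{V<v\le 2V}2(v-V)\Lambda(v)\Bigl(\tfrac{p}{v}\Bigr)+\sum_{2V<v\le 4V}(4V-v)\Lambda(v)\Bigl(\tfrac{p}{v}\Bigr),
$$
where both weights are non-negative and tent-shaped on $(V,4V]$, peaking at $v=2V$ with value $2V$.

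Now argue by contradiction: suppose every prime $v\in[V,4V]$ satisfies $(p/v)=-1$. The possible prime $v=p$ in the range contributes $0$ and can be discarded. Prime powers $\ell^k$ with $k\ge 2$ and $\ell^k\le 4V$ number only $O(V^{1/2})$, and each contributes at most $2V\cdot\log(4V)$ in magnitude to $L$, giving a total prime-power error of $O(V^{3/2}\log V)$. The genuine prime terms all carry sign $-1$, so restricting to $\ell\in[3V/2,\,5V/2]$ where the tent weight is $\ge V$, and invoking Chebyshev's/PNT bound $\theta(5V/2)-\theta(3V/2)\gg V$, we obtain
$$
L\le -V\bigl(\theta(5V/2)-\theta(3V/2)\bigr)+O(V^{3/2}\log V)\le -c_1 V^2+O(V^{3/2}\log V)
$$
for some absolute $c_1>0$, valid for $V$ above an absolute threshold.

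Combining with $L=O(V^{3/2}\log p)$ yields $V^2\ll V^{3/2}\log p$, i.e.\ $V^{1/2}=O(\log p)$. Taking the constant $C$ in $V\ge C(\log p)^2$ larger than this implied constant produces a contradiction, so some prime $v\in[V,4V]\subseteq[V,4V]$ has $(p/v)=1$. The only delicate point is that the PNT lower bound on $\theta(5V/2)-\theta(3V/2)$ must be uniform in $V$ down to the threshold $V\ge C(\log p)^2$; this is supplied unconditionally by Chebyshev-type bounds, and the constant $C$ can be chosen to absorb both the PNT threshold and the constants implicit in the character-sum bound.
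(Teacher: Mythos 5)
Your proof is correct, but it takes a genuinely different (and slightly cleaner) route than the paper's. The paper works directly with the single sum $\Psi(4V)$ from Lemma~\ref{lem:CharSum}: it splits the range into $v\le V$ and $V<v\le 4V$, bounds the first range trivially by $\sum_{v\le V}\Lambda(v)\le V+o(V)$, and computes the second range using the prime number theorem under the assumption that all Legendre symbols there are $-1$, yielding $-\tfrac{9}{8}V+o(V)$. The two pieces combine to make $\Psi(4V)\le -\tfrac{1}{8}V+o(V)$, which for $V\ge C(\log p)^2$ contradicts the GRH bound $\Psi(4V)=O(V^{1/2}\log p)$. The whole point is that the specific constants ($9/8$ versus $1$) must beat each other, which is why the interval $[V,4V]$ appears rather than something shorter. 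Your linear combination $L=4V\Psi(4V)-6V\Psi(2V)+2V\Psi(V)$ is a second-difference (tent-weight) construction that annihilates the $v\le V$ contribution identically, so there is no numerical balancing to check: the resulting weight is non-negative on $(V,4V]$ and bounded below by $V$ on $[3V/2,5V/2]$, and a Chebyshev bound for $\theta$ immediately produces $L\le -c_1V^2+O(V^{3/2}\log V)$. Applying Lemma~\ref{lem:CharSum} three times gives $L=O(V^{3/2}\log p)$, yielding the contradiction in the same parameter regime. Your handling of the prime-power error (which the paper glosses over via $o(V)$ in its partial-summation computation) is also explicit, which is a small improvement in rigor. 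Both proofs are correct; yours trades one extra application of Lemma~\ref{lem:CharSum} for the elimination of the "trivial range" and the constant-chasing that goes with it.
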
 

\begin{proof} We can certainly assume that $V <p/4$.  Suppose that 
$$
\(\frac{p}{v}\) =   -1. 
$$
for every   prime $v \in [V, 4V]$.
Then, by the prime number theorem, and partial 
summation, we easily derive
\begin{equation}
\label{eq:sum 4V}
\begin{split}
- \sum_{v  \in [V, 4V]}& \(1 - \frac{v}{4V}\)\Lambda(v) \(\frac{p}{v}\)
=   \sum_{v  \in [V, 4V]} \(1 - \frac{v}{4V}\)\Lambda(v)  \\
&=  4V -  V - \frac{1}{4V}  \sum_{v  \in [V, 4V]} v\Lambda(v) + o(V)  \\
&=  3V  - \frac{1}{4V}\(\frac{15}{2} V^2 +o(V^2)\) + o(V) =  \frac{9}{8} V + o(V).
\end{split}
\end{equation}
On the other hand, we have, trivially
\begin{equation}
\label{eq:sum V}
 \sum_{v \le V}  \(1 - \frac{v}{3V}\)\Lambda(v) \(\frac{p}{v}\) 
\le    \sum_{v \le V}   \Lambda(v) =   V + o(V)  .
\end{equation}
Hence, using~\eqref{eq:sum 4V} and~\eqref{eq:sum V}, we obtain
$$
\sum_{v \le 4V}  \(1 - \frac{v}{4V}\)\Lambda(v) \(\frac{p}{v}\)  \le  
-\frac{1}{8} V + o(V). $$

This however contradicts Lemma~\ref{lem:CharSum} (used  with $U = 4V$),
provided that  $V \ge C (\log p)^2$ for a sufficiently large 
absolute constant $C >0$.
\end{proof} 

The following statement is well-known and follows immediately from 
the P{\'o}lya-Vinogradov inequality, see~\cite[Theorem~12.5]{IwKow}.

\begin{lemma} 
\label{lem:QuadRes Aver} Let $T$ and $V$ denote real numbers for which $T >2 V> 2$.
For all but $O(T V^{-1} \log V + V\log V)$ primes $p \in [T,2T]$, 
there is a prime $v \in [V, 2V]$ with 
$$
\(\frac{p}{v}\) = 1. 
$$
\end{lemma}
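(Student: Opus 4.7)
The plan is a second-moment argument. Let $\pi^{*}$ denote the number of primes in $[V,2V]$, so Chebyshev's bounds give $\pi^{*}\asymp V/\log V$, and for each integer $n\in[T,2T]$ set
$$
S(n)=\sum_{\substack{v\ \text{prime}\\ V\le v\le 2V}}\(\frac{n}{v}\).
$$
Call a prime $p\in[T,2T]$ \emph{bad} if $(p/v)=-1$ for every prime $v\in[V,2V]$. Since $2V<T\le p$ forces $v\ne p$ and hence $(p/v)\in\{\pm 1\}$, a bad $p$ satisfies $S(p)=-\pi^{*}$, so that $S(p)^{2}\gg V^{2}/(\log V)^{2}$. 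Writing $B$ for the number of bad primes, the strategy is the trivial inequality
$$
B\cdot(\pi^{*})^{2}\le\sum_{T\le n\le 2T}S(n)^{2},
$$
together with an upper bound on the right-hand side obtained by opening the square.

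First I would interchange summations to get
$$
\sum_{T\le n\le 2T}S(n)^{2}=\sum_{v_{1},v_{2}}\ \sum_{T\le n\le 2T}\(\frac{n}{v_{1}}\)\(\frac{n}{v_{2}}\),
$$
where $v_{1},v_{2}$ range over primes in $[V,2V]$. The $\pi^{*}$ diagonal terms each equal $T+O(V)$, contributing $O(TV/\log V)$ in total. The main step is the off-diagonal estimate: for $v_{1}\ne v_{2}$, the Jacobi symbol $\chi(n)=(n/v_{1}v_{2})$ is a non-principal real Dirichlet character of conductor dividing $v_{1}v_{2}=O(V^{2})$, so the P\'olya--Vinogradov inequality gives
$$
\Bigl|\sum_{T\le n\le 2T}\chi(n)\Bigr|=O(V\log V).
$$
With $O(V^{2}/(\log V)^{2})$ such pairs, the off-diagonal contribution is $O(V^{3}/\log V)$, and combining with the diagonal yields
$$
\sum_{T\le n\le 2T}S(n)^{2}=O\bigl(TV/\log V+V^{3}/\log V\bigr).
$$

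Dividing by $(\pi^{*})^{2}\asymp V^{2}/(\log V)^{2}$ then gives $B=O(TV^{-1}\log V+V\log V)$, which is the stated bound. The only real subtlety I foresee is confirming that $(\cdot/v_{1}v_{2})$ really is a non-principal Dirichlet character of conductor $O(V^{2})$ in the variable $n$; this follows from the multiplicativity of the Jacobi symbol and the fact that $v_{1}v_{2}$ is not a perfect square, provided one restricts to odd primes $v_{1},v_{2}$ (the case $v=2$ arises only for a bounded range of $V$ where the claimed bound is vacuous). Everything else is routine bookkeeping.
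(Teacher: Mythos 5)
Your proof is correct and takes essentially the same route as the paper: both arguments reduce the count of bad primes to the second moment $\sum_{n\in[T,2T]}S(n)^2$, split into diagonal and off-diagonal pairs $(v_1,v_2)$, and apply the P\'olya--Vinogradov inequality to the Jacobi symbol modulo $v_1v_2$ in the off-diagonal case. The only cosmetic difference is how the key inequality $B(\pi^*)^2\le\sum_n S(n)^2$ is obtained: you observe directly that each bad prime contributes exactly $(\pi^*)^2$ to the second moment, whereas the paper derives the equivalent bound $\#\cP(\#\cV)^2\le\sum_k S(k)^2$ via Cauchy--Schwarz applied to $W=\sum_{p\in\cP}S(p)=-\#\cP\#\cV$; your version is marginally more direct, but the content and the resulting bound are identical.
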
 

\begin{proof} Let  $\cV$ be the set of primes $v \in [V,2V]$
and let $\cP$ be the set of primes $p \in [T,2T]$
such that  
$$
\(\frac{p}{v}\) \ne 1, \qquad v \in \cV.
$$
Note that $\cP$ and $\cV$ are disjoint. Hence,
$$
\sum_{v \in \cV}\(\frac{p}{v}\) = - \# \cV,
$$
for every $p\in \cP$.
So, for the double sum
$$
W = \sum_{p \in \cP} \sum_{v \in \cV}\(\frac{p}{v}\) 
$$
we have
\begin{equation}
\label{eq:W lower}
W= - \# \cP \# \cV.
\end{equation}
On the other hand, we have
$$
|W| \le \sum_{p \in \cP} \left| \sum_{v \in \cV}\(\frac{p}{v}\) \right|.
$$
Using the Cauchy inequality and expanding the summation 
to all integers  $k \in [T,2T]$ we derive
$$
|W|^2 \le \# \cP \sum_{p \in \cP} \left| \sum_{v \in \cV}\(\frac{p}{v}\) \right|^2
\le  \# \cP \sum_{k \in [T,2T]} \left| \sum_{v \in \cV}\(\frac{k}{v}\) \right|^2.
$$
Now, squaring out and changing the order of summations, we obtain 
$$
|W|^2 
\le  \# \cP \sum_{v_1,v_2 \in \cV}\sum_{k \in [T,2T]}  \(\frac{k}{v_1v_2}\) .
$$
Finally, estimating the inner sum trivially for $v_1=v_2$ and 
using the P{\'o}lya-Vinogradov inequality for $v_1\ne v_2$
(see~\cite[Theorem~12.5]{IwKow}),
we derive
\begin{equation}
\label{eq:W upper}
|W|^2 =O\( \# \cP  \( \# \cV T +  \# \cV^2 V \log V\)\).
\end{equation}
Comparing~\eqref{eq:W lower} and~\eqref{eq:W upper} and applying the 
prime number theorem yields the desired result. 
\end{proof}

We note that by using the Burgess bound (see~\cite[Theorem~12.6]{IwKow}),
in the proof of Lemma~\ref{lem:QuadRes Aver} one can obtain 
a series of other estimates.

\subsection{Smooth numbers}

We recall that for any real $y> 1$, a positive integer~$n$ is said to be $y$-\emph{smooth} if its prime divisors are all less then or equal to~$y$. 
The {\sl Dickman--de Bruijn function} $\rho(u)$ is  defined recursively by
$$
\rho(u):=\left\{
\begin{array}{ll}
1 & \text{ if } 0 \leq u \leq 1,\\
{\displaystyle 1 - \int_{1}^{u}\frac{\rho(v-1)}{v}dv} & \text{ if } u>1.
\end{array}
\right.
$$
As usual, we denote by $\psi(x,y)$ the number of $y$-smooth $n \le x$. We need the following classical asymptotic formula for $\psi(x,y)$, which can be found in~\cite[Chapter~III.5, Corollary~9.3]{Ten}.

\begin{lemma} 
\label{lem:Smooth} For real $x \ge y > 1$ we define 
$$
u := \frac{\log x}{\log y}.
$$
For any fixed $\varepsilon > 0$, for 
$1 \le u \le \exp \((\log y)^{3/5 - \varepsilon}\)$ we have 
$$
\psi(x,y) \sim \rho(u) x   
$$
as $y\to \infty$. 
\end{lemma}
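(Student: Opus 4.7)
The plan is to prove the classical Dickman--de Bruijn asymptotic $\psi(x,y) \sim \rho(u) x$ in the Hildebrand range $u \le \exp((\log y)^{3/5-\varepsilon})$ by transferring information about the Dickman function to $\psi$ through a functional equation and an induction on $u$.

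First, I would derive the Buchstab-style integral identity for $\psi$. Using $\log n = \sum_{p^k \mid n} \log p$ and summing over $y$-smooth $n \le x$ yields
\[
\psi(x,y) \log x = \int_{1}^{x} \psi(t,y)\,\frac{dt}{t} + \sum_{p \le y,\ k \ge 1} (\log p)\,\psi(x/p^k, y) + O(x),
\]
which mirrors the Dickman relation $u\rho(u) = \int_{u-1}^u \rho(v)\,dv$ obtained from the defining recursion. Writing $\psi(x,y) = x\rho(u) + E(x,y)$, substituting, and using Mertens' theorem together with the prime number theorem on the weighted prime sum, the leading terms cancel, leaving an integral inequality of the form
\[
|E(x,y)|\log x \le \int_{x/y}^{x} |E(t,y)|\,\frac{dt}{t} + R(x,y),
\]
where $R(x,y)$ absorbs both the discretization error from replacing $\sum_{p \le y} (\log p)\,g(p)$ by $\int g$ and the prime number theorem remainder on the weighted prime sums.

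Second, the induction on $u$ proceeds in dyadic steps: if the target bound on $E$ holds for all pairs $(x',y')$ with $\log x'/\log y' \le u_0 - 1$, then the integral inequality propagates it to $u \le u_0$, subject to $R(x,y)$ being sufficiently small. The base case $u \le 2$ is handled by the elementary formula $\psi(x,y) = \lfloor x\rfloor - \sum_{y < p \le x}\lfloor x/p\rfloor$, whose asymptotic matches $\rho(u) = 1 - \log u$ on $[1,2]$ via Mertens' theorem. Uniformity in $y$ is maintained by choosing the step size of the induction to depend on $\log y$.

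The crux of the argument, and its main obstacle, is controlling $R(x,y)$ well enough to reach $u \le \exp((\log y)^{3/5-\varepsilon})$. Elementary Chebyshev-style bounds sustain the induction only up to roughly de Bruijn's original range; to push the exponent to $3/5 - \varepsilon$ one must invoke the Korobov--Vinogradov zero-free region $\sigma > 1 - c(\log t)^{-2/3}(\log\log t)^{-1/3}$ for $\zeta(s)$, whose shape is precisely what produces the exponent $3/5$ after taking Mellin transforms. An essentially equivalent route, due to Hildebrand and Tenenbaum, bypasses the functional-equation induction altogether: apply Perron's formula to $\zeta(s,y) = \prod_{p \le y}(1-p^{-s})^{-1}$ and shift the contour to the saddle point $\alpha = \alpha(x,y)$ determined by $\sum_{p \le y}(\log p)/(p^\alpha - 1) = \log x$. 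The same zero-free region bounds the contour tails, and expanding $\log \zeta(s,y)$ around $\alpha$ recovers the main term $x\rho(u)$ through the Laplace transform of $\rho$; the allowable range of $u$ is dictated exactly by how far one can move $\alpha$ away from $1$ while staying inside the zero-free region.
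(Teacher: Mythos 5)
The paper does not supply a proof of this lemma; it is quoted verbatim from Tenenbaum's book (\cite[Chapter~III.5, Corollary~9.3]{Ten}), where it appears as Hildebrand's theorem on the distribution of smooth numbers. Your sketch correctly identifies both of the standard proof routes: Hildebrand's functional-equation induction, and the Hildebrand--Tenenbaum saddle-point analysis of $\zeta(s,y)=\prod_{p\le y}(1-p^{-s})^{-1}$. The Buchstab-type identity you write down, its correspondence with the Dickman relation $u\rho(u)=\int_{u-1}^{u}\rho(v)\,dv$, the base case $1\le u\le 2$ via the inclusion--exclusion formula and Mertens' theorem, and the role of the Korobov--Vinogradov zero-free region in producing the exponent $3/5$ are all accurately stated. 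The one place your sketch underestimates the difficulty is the induction step: naive dyadic stepping with a uniformly propagated error does not by itself reach $u\le\exp\bigl((\log y)^{3/5-\varepsilon}\bigr)$, since the error accumulated over $O(u)$ steps must be balanced against the super-exponential decay $\rho(u)=u^{-u+o(u)}$; Hildebrand's argument manages this through a carefully tuned weight depending on $u$ together with a sharper integral inequality, and this bookkeeping is where most of the work lies. That is a sketch-level gap rather than a conceptual error, and in any case the appropriate move in the present paper is exactly what the authors do --- cite the known result rather than reprove it.
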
 

\subsection{Arithmetic functions and smooth multiples in intervals}

Let $\tau(k)$ denote the number of positive divisors of an integer $k \ge 1$.
We need a bound on the average value of the divisor function 
$\tau(k)$ in short intervals. In particular, we use
the following special case of a  
much more general estimate of Shiu~\cite[Theorem~1]{Shiu}; 
further extensions are due to Nair and Tenenbaum~\cite{NaTe}.

\begin{lemma}\label{lem:tau}
For any fixed real  $\varepsilon, \lambda > 0$, and all sufficiently 
large real $z\ge w \ge z^\varepsilon$, 
we have
$$
\sum_{z \le k \le z+w} \tau(k)^\lambda
= O\(w(\log w)^{2^\lambda-1}\),
$$
where the implied constant depends only on $\varepsilon$ and $\lambda$.
\end{lemma}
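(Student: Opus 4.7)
The plan is to deduce this estimate as a direct specialization of Shiu's theorem, which is the cited general result, by choosing the multiplicative function $f(n)=\tau(n)^\lambda$ and computing the associated Mertens-type sum over primes. Shiu's theorem states that for a nonnegative multiplicative $f$ satisfying mild growth conditions (namely $f(p^k)\le A_1^k$ for some constant $A_1$ and $f(n)\ll_\delta n^\delta$ for every $\delta>0$), and for $z^{\varepsilon}\le w\le z$, one has
$$
\sum_{z\le k\le z+w} f(k)\;\ll\; \frac{w}{\log z}\,\exp\!\Bigl(\sum_{p\le z}\frac{f(p)}{p}\Bigr),
$$
with an implied constant depending only on $\varepsilon$ and the growth constants.

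For $f(n)=\tau(n)^\lambda$, multiplicativity is clear, and on prime powers we have $f(p^k)=(k+1)^\lambda\le 2^{k\lambda}$, so the first growth condition holds with $A_1=2^\lambda$. The bound $\tau(n)=n^{o(1)}$ is standard and gives the polynomial bound. On primes we simply have $f(p)=2^\lambda$, so by Mertens' theorem
$$
\sum_{p\le z}\frac{f(p)}{p}\;=\;2^\lambda\sum_{p\le z}\frac{1}{p}\;=\;2^\lambda\,\log\log z+O(1),
$$
and therefore
$$
\exp\!\Bigl(\sum_{p\le z}\frac{f(p)}{p}\Bigr)\;\asymp\;(\log z)^{2^\lambda}.
$$

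Plugging this back into Shiu's inequality yields
$$
\sum_{z\le k\le z+w}\tau(k)^\lambda \;\ll\;\frac{w}{\log z}\,(\log z)^{2^\lambda}\;=\;w(\log z)^{2^\lambda-1}.
$$
Finally, the assumption $w\ge z^\varepsilon$ gives $\log w\ge \varepsilon\log z$, so $\log z\asymp_\varepsilon\log w$ and the bound can be rewritten in terms of $\log w$, which is the form stated in the lemma.

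The only genuine work lies in invoking Shiu's theorem correctly; if one wanted a self-contained proof, the main obstacle would be reproving Shiu's estimate, which requires a short-interval variant of the Selberg sieve (or the Nair--Tenenbaum refinement) and is considerably more delicate than the Mertens-type computation above. Since the lemma is quoted as a special case of an existing result, citing $[\text{Shiu, Theorem~1}]$ together with the prime-sum calculation is the cleanest route.
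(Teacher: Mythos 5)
Your proposal is correct and matches what the paper does: Lemma~\ref{lem:tau} is stated without proof, presented explicitly as a special case of Shiu's theorem, and the intended derivation is exactly the Mertens-type computation you carry out. Specializing $f(n)=\tau(n)^\lambda$, checking $f(p^k)=(k+1)^\lambda\le 2^{k\lambda}$ and $\tau(n)=n^{o(1)}$, and using $\sum_{p\le z}2^\lambda/p = 2^\lambda\log\log z + O(1)$ to get the exponent $2^\lambda-1$ after dividing by $\log z$ (and then replacing $\log z$ by $\log w$ via $w\ge z^\varepsilon$) is precisely the standard route the authors have in mind.
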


The following statement is one of the main ingredients of the proof
of Theorem~\ref{thm:Card-M2M}. 

\begin{lemma} 
\label{lem:SmoothInt}  For any fixed $\varepsilon > 0$ and all sufficiently 
large real positive $x$, $y$ and $z$ with  
$$z^{1/2 -\varepsilon} \ge x > z^{\varepsilon}\quad \text{and}
\quad \exp((\log x)^{1-\varepsilon}) 
\ge y \ge \exp\((\log \log x)^{5/3 + \varepsilon}\),
$$
define $u$ by $y^u=x$. There are at least $z^{1/2} u^{-u + o(u)} (\log z)^{-3}$ integers 
$k \in  [z, z+z^{1/2}]$ that have a $y$-smooth divisor $m \in [x, 2x]$
and for which $\tau(k) \le  u^{u + o(u)}(\log z)^{2}$.
\end{lemma}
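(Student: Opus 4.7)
The plan is to count $k \in [z, z+z^{1/2}]$ whose $y$-smooth part $P(k)$ (i.e., the largest $y$-smooth divisor of $k$) lies in $[x, 2x]$. Each such $k$ factors canonically as $k = mr$ with $m = P(k)$ a $y$-smooth integer in $[x,2x]$ and $r$ coprime to every prime $\le y$, so $m$ is automatically a $y$-smooth divisor of $k$ in $[x,2x]$. The key advantage of working with the smooth part (rather than with any smooth divisor in $[x,2x]$) is that the factorization $k = mr$ is unique, so we avoid a Cauchy--Schwarz-style multiplicity argument that would cost an extra factor of $u^{-u}$.

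For the first main estimate, for each $y$-smooth $m \in [x,2x]$ I count $y$-rough $r$ in the interval $[z/m,\,z/m+z^{1/2}/m]$, of length $L = z^{1/2}/m \ge z^{\varepsilon}/2$. Since $\log L \ge \varepsilon\log z$ while $\log y \le (\log z)^{1-\varepsilon}$, the ratio $\log L/\log y$ grows like $(\log z)^{\varepsilon}$, so the fundamental lemma of the sieve yields
$$
\#\bigl\{r \in [z/m,\,z/m+L] : \gcd\bigl(r, \textstyle\prod_{p\le y} p\bigr) = 1\bigr\} = L\,W(y)\,(1+o(1)),
$$
where $W(y) = \prod_{p \le y}(1-1/p) \sim e^{-\gamma}/\log y$ by Mertens' theorem. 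Summing over $m$ and using Lemma~\ref{lem:Smooth} together with Abel summation to get $\sum_{m \in [x,2x],\,y\text{-smooth}} 1/m \sim (\log 2)\rho(u)$, I conclude
$$
\#\{k \in [z,z+z^{1/2}] : P(k) \in [x,2x]\} \gg z^{1/2}\rho(u)/\log y.
$$

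For the second main estimate I discard those $k$ with $\tau(k) > T := u^{u+o(u)}(\log z)^2$. Lemma~\ref{lem:tau} with $\lambda=1$ gives $\sum_{k\in[z,z+z^{1/2}]} \tau(k) = O(z^{1/2}\log z)$, so Markov's inequality yields $\#\{k : \tau(k) > T\} = O(z^{1/2}\log z/T)$. If I take the $o(u)$ in $T$ to tend to infinity slowly (for example $T = u^{u+\log u}(\log z)^2$), this upper bound becomes $o(z^{1/2}\rho(u)/\log z)$, which is negligible next to the preceding lower bound. Subtracting and inserting $\rho(u) = u^{-u+o(u)}$ and $\log y \le \log z$ produces a lower bound $\gg z^{1/2}u^{-u+o(u)}/\log z$ on the desired count, which is stronger than the claimed $z^{1/2}u^{-u+o(u)}(\log z)^{-3}$.

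The main technical task is to verify the fundamental lemma with an error uniformly $o(1)$ across all $m \in [x,2x]$; this is ensured by the upper bound $y \le \exp((\log x)^{1-\varepsilon})$ combined with $L \ge z^{\varepsilon}/2$, which forces the sifting parameter $\log L/\log y$ to be large. A subsidiary check is that the hypothesis $u \le \exp((\log y)^{3/5-\varepsilon'})$ of Lemma~\ref{lem:Smooth} holds throughout the range, which follows from the lower bound $y \ge \exp((\log\log x)^{5/3+\varepsilon})$ after comparing exponents.
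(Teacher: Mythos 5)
Your approach is genuinely different from the paper's and, once a computational slip is repaired, it works and in fact gives a slightly stronger lower bound. The paper forms $k=mr$ for \emph{every} $y$-smooth $m\in[x,2x]$ and \emph{every} integer $r$ in the corresponding interval, so a given $k$ can be produced many times; it controls this multiplicity via the second-moment bound $\sum\vartheta(k)^2\le\sum\tau(k)^2=O(z^{1/2}(\log z)^3)$ from Lemma~\ref{lem:tau} with $\lambda=2$, followed by Cauchy--Schwarz, and this costs the factor $(\log z)^{-3}$ in the final bound. By contrast, you count $k$ according to its \emph{canonical} $y$-smooth part $P(k)$, which makes the decomposition $k=mr$ (with $m=P(k)$ $y$-smooth and $r$ $y$-rough) unique, so no Cauchy--Schwarz is needed; the price is that you need a sieve (the fundamental lemma) to count $y$-rough $r$ in an interval of length $L=z^{1/2}/m\ge z^{\varepsilon}/2$, but the sifting parameter $\log L/\log y\gtrsim\varepsilon(\log z)^{\varepsilon}$ is large uniformly in $m$, so the sieve asymptotic is uniform and the estimate $\gg z^{1/2}\rho(u)/\log y\gg z^{1/2}u^{-u+o(u)}(\log z)^{-1}$ follows cleanly. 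The paper's argument is more elementary (no sieve), yours gives a sharper power of $\log z$; both are legitimate routes.

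There is, however, one genuine error in the exceptional-set step. You claim that $T=u^{u+\log u}(\log z)^2$ suffices, but this is too small. Writing $\rho(u)^{-1}=u^{u+g(u)}$, the standard asymptotic $\log(1/\rho(u))=u\log u+u\log\log u-u+o(u)$ gives $g(u)\sim u\log\log u/\log u$, which is indeed $o(u)$ but grows \emph{much faster} than $\log u$. Consequently
$$
\rho(u)\,u^{u+\log u}=u^{\log u-g(u)}\longrightarrow 0,
$$
so the Markov bound $O(z^{1/2}\log z/T)=O(z^{1/2}u^{-u-\log u}(\log z)^{-1})$ is \emph{not} $o(z^{1/2}\rho(u)/\log y)$; the inequality goes the wrong way, and the exceptional set could swamp the main term. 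The fix is easy and keeps the rest of the argument intact: pick $T$ explicitly larger than $\rho(u)^{-1}$, for instance $T=\rho(u)^{-1}(\log z)^3$. Then the exceptional count is $O(z^{1/2}\log z/T)=O(z^{1/2}\rho(u)(\log z)^{-2})=o(z^{1/2}\rho(u)/\log y)$ as required, and since $\rho(u)^{-1}=u^{u+o(u)}$ while $\log z=u^{o(u)}$ (because $\log\log z=o(u\log u)$ under the stated hypotheses, which force $u\to\infty$), one still has $T=u^{u+o(u)}(\log z)^2$, matching the form in the statement.
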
 

\begin{proof} Let $\cM$ be the set of  $y$-smooth integers $m \in [x, 2x]$.
It follows from Lemma~\ref{lem:Smooth} and well known results on the growth 
of $\rho(u)$ (see~\cite[Section~III.5.4]{Ten}),  that 
\begin{equation}
\label{eq:set M}
 \# \cM = u^{-u + o(u)} x
\end{equation}
as $u \to \infty$.

For each $m\in \cM$ we consider the products $k = mr$ where 
$r$ runs through $z^{1/2}/m + O(1)$ integers of the interval $[z/m, z/m+z^{1/2}/m]$.
Let $\vartheta(k)$ be the number of such representations.
Clearly,
\begin{align*}
\sum_{k \in [z, z+z^{1/2}]} \vartheta(k) &\ge \sum_{m \in \cM} \(z^{1/2}/m + O(1)\) \\
&= (1+o(1)) z^{1/2} \sum_{m \in \cM} 1/m \ge (1/2+o(1)) z^{1/2} \# \cM x^{-1}.
\end{align*}
Hence, using~\eqref{eq:set M}, we derive
\begin{equation}
\label{eq:theta 1}
\sum_{k \in [z, z+z^{1/2}]} \vartheta(k) \ge  
 z^{1/2}u^{-u + o(u)}
\end{equation}
as $u \to \infty$.

On the other hand, since we obviously have $\vartheta(k) \le \tau(k)$,
we obtain from Lemma~\ref{lem:tau} with $\lambda = 2$ the bound
\begin{equation}
\label{eq:theta 2}
\sum_{k \in [z, z+z^{1/2}]} \vartheta(k)^2 = O\(z^{1/2}(\log z)^{3}\).
\end{equation}
Thus if $\cK$ is the set of $k\in    [z, z+z^{1/2}]$ 
with $\vartheta(k) > 0$, then by the Cauchy inequality we have
$$
\(\sum_{k \in [z, z+z^{1/2}]} \vartheta(k)\)^2
\le \# \cK \sum_{k \in [z, z+z^{1/2}]} \vartheta(k)^2.
$$
Using~\eqref{eq:theta 1} and~\eqref{eq:theta 2}, we then derive
$$
\# \cK \ge z^{1/2}u^{-u + o(u)} (\log z)^{-3}. 
$$

Now $\cE$ be the set of $k\in    [z, z+z^{1/2}]$ with 
$\tau(k) >  u^{u + o(u)}(\log z)^{3}$. Using
Lemma~\ref{lem:tau} with $\lambda = 2$ again, we obtain
$$
\# \cE \(u^{u + o(u)}(\log z)^{3}\)^2 =  O\(z^{1/2}(\log z)^{3}\).
$$
Hence 
$$
\# \cE \le  z^{1/2}u^{-2u + o(u)} (\log z)^{-3} = o(\# \cK),
$$
which concludes the proof. 
\end{proof} 

\subsection{Class numbers and the distribution of
the number of $\F_p$-rational 
points on elliptic curves}

Finally, we require a result of Lenstra~\cite[Proposition~1.9]{Len} that relates the number of elliptic curves $E_{a,b}/\F_p$
with trace of Frobenius $t$ to the Hurwitz-Kronecker class number $H(t^2-4p)$. 
Here we formulate this result in a form convenient for our applications.

\begin{lemma}
\label{lem:Len} For any set of integers $\cS \in [p-p^{1/2}, p+p^{1/2}]$
of cardinality $\# \cS \ge 3$, we have
$$
\#\{(a,b)\in \F_p^2~:~\#E_{a,b}(\F_p)\in \cS\} \gg \# \cS p^{3/2} /\log p .
$$
\end{lemma}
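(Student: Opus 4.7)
The plan is to deduce this reformulation directly from Lenstra's original result \cite[Proposition~1.9]{Len}, which for each integer $t$ with $|t|<2\sqrt p$ and $t\ne 0$ gives an exact count for the number of pairs $(a,b)\in\F_p^2$ with $4a^3+27b^2\ne 0$ and $\#E_{a,b}(\F_p)=p+1-t$, of the shape
$$
c_t(p-1)H(t^2-4p),
$$
where $c_t$ is bounded below by an absolute positive constant (the correction absorbing the contribution of the curves with extra automorphisms, i.e.\ with $j\in\{0,1728\}$) and $H$ denotes the Hurwitz--Kronecker class number. Each element $N\in\cS$ corresponds to a unique trace $t=p+1-N$ with $|t|\le\sqrt p+1$, so every discriminant $D=t^2-4p$ arising satisfies $|D|\asymp p$.

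I would then apply the classical lower bound $H(D)\gg \sqrt{|D|}/\log|D|$, which follows from the Dirichlet class number formula together with an effective lower bound on $L(1,\chi_D)$ at $s=1$ (such as Tatuzawa's refinement of Siegel's theorem), valid for all discriminants in the range with at most one exception. Substituting $|D|\asymp p$ then yields $H(t^2-4p)\gg \sqrt p/\log p$ for each non-exceptional index.

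The hypothesis $\#\cS\ge 3$ is built in precisely to absorb the $O(1)$ potential losses: the index $N=p+1$ (if present in $\cS$), for which $t=0$ may require separate treatment as the supersingular case, and a single possible discriminant affected by a Siegel zero. Since $\#\cS-2\ge \#\cS/3$ whenever $\#\cS\ge 3$, a positive proportion of $\cS$ still contributes fully, and summing over the surviving indices gives
$$
\#\{(a,b)\in\F_p^2:\#E_{a,b}(\F_p)\in\cS\}\gg \#\cS\cdot(p-1)\cdot\frac{\sqrt p}{\log p}\gg \#\cS\,\frac{p^{3/2}}{\log p},
$$
as required. The main delicate points are matching the exact normalization constants $c_t$ in Lenstra's formula (encoding the distinction between $\overline{\F}_p$- and $\F_p$-isomorphism classes) and invoking the class number bound uniformly in $D$; both are standard, so the lemma amounts to a repackaging of~\cite[Proposition~1.9]{Len} together with a uniform lower bound on $H$.
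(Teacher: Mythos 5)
The paper does not supply a new proof of this lemma: it is stated as a direct reformulation of Lenstra's Proposition~1.9, which \emph{already} asserts the two-sided estimate
$$
c_3 \,(\# S - 2)\,\sqrt{p}\,(\log p)^{-1}\ \le\ \#\{\text{$\F_p$-isom.\ classes of $E$ with } \#E(\F_p)\in S\}\ \le\ c_4\,\#S\,\sqrt{p}\,\log p\,(\llog p)^2
$$
for any set $S$ contained in the Hasse interval. Passing from weighted isomorphism classes to pairs $(a,b)$ costs a factor $\asymp p$ (each generic $\F_p$-isomorphism class corresponds to $(p-1)/\#\mathrm{Aut}_{\F_p}(E)$ pairs), and the hypothesis $\#\cS\ge 3$ absorbs the $-2$. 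That is the whole argument.

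Your proposal takes a genuinely different route: you misattribute to Proposition~1.9 the elementary per-trace count (that the number of isomorphism classes with trace $t$ equals $H(t^2-4p)$; this is essentially Deuring's result and appears earlier in Lenstra's paper, not in Prop.~1.9), and you then re-derive Lenstra's lower bound by invoking a class-number estimate directly. The structure of your argument --- excising $t=0$ and one Siegel-exceptional discriminant, using $\#\cS\ge 3$ to keep a positive proportion --- is essentially what Lenstra's own proof of Prop.~1.9 does, so this is a re-derivation of the cited result rather than an application of it.

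There is, however, a substantive citation error that your route depends on. You claim $H(D)\gg \sqrt{|D|}/\log|D|$ for non-exceptional $D$ and attribute the underlying $L$-function bound to ``Tatuzawa's refinement of Siegel's theorem.'' Tatuzawa's theorem gives only $L(1,\chi_D)\gg_\varepsilon |D|^{-\varepsilon}$ (effectively, with at most one exceptional $D$), which would yield $H(D)\gg_\varepsilon |D|^{1/2-\varepsilon}$ and hence a final bound of $\gg \#\cS\,p^{3/2-\varepsilon}$ --- strictly weaker than the lemma's $\gg \#\cS\,p^{3/2}/\log p$. The bound $L(1,\chi_D)\gg 1/\log|D|$ for all but one $D$ in the range is a consequence of the classical zero-free region together with the Landau--Page theorem on uniqueness of the exceptional modulus, not of Siegel--Tatuzawa. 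With that citation corrected (and one additional caveat about making the constants in the Hurwitz class number formula uniform over the conductor of the order), your argument goes through, but as written it does not deliver the stated $\log p$ savings.
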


\section{Some Background on Algorithms}

\subsection{Finding smooth factors of integers}

The following is a simplified version of a result of 
Lenstra, Pila, and 
Pomerance~\cite[Theorem~1.1]{LPP1}, which gives a 
slower but rigorous version of the {\it elliptic curve factorisation
method\/} (ECM) of Lenstra~\cite{Len}.

\begin{lemma}\label{lem:SmoothFact}
There is a probabilistic algorithm that, given an 
integer~$n$ and a real number $y>2$, finds all prime factors
$\ell\le y$ of $n$ in expected time 
$\exp\((\log y)^{2/3 + o(1)}\) (\log n)^{O(1)},$ as $y \to \infty$. 
\end{lemma}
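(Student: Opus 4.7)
The plan is to obtain this lemma directly from Theorem~1.1 of Lenstra--Pila--Pomerance~\cite{LPP1}, whose conclusion is essentially identical modulo the packaging of lower-order factors. That theorem exhibits a probabilistic algorithm which, on input an integer~$n$ and a bound $y$, outputs the set of primes $\ell \le y$ dividing~$n$ within expected time $\exp\(O\((\log y)^{2/3}(\llog y)^{1/3}\)\)\cdot(\log n)^{O(1)}$, using hyperelliptic Jacobians of growing genus in place of elliptic curves as the source of smooth auxiliary groups. Absorbing the $(\llog y)^{1/3}$ factor into the $o(1)$ in the exponent recovers the claimed bound $\exp\((\log y)^{2/3+o(1)}\)(\log n)^{O(1)}$.

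If one prefers to build the lemma from a primitive ``find one prime factor of $n$ that is $\le y$'' subroutine, the standard iteration suffices. After a successful call returning a prime $\ell\le y$ dividing $n$, I would remove the full $\ell$-adic part of~$n$ by repeated trial division (which costs $(\log n)^{O(1)}$) and recurse on the cofactor. Each success strictly reduces the number of distinct prime divisors of $n$ that lie below $y$, so at most $O(\log n)$ successful calls occur. Termination (the certification that no further prime factor $\le y$ remains) is handled by running the primitive routine a polylogarithmic number of times with amplified confidence, so that the probability of a false negative across all invocations is negligible; this adds only polylogarithmic overhead to the count of invocations.

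The main obstacle, and it is really a bookkeeping one rather than a mathematical one, is verifying that the sub-dominant $\llog$ factors in the LPP analysis can be absorbed cleanly into the $o(1)$ in the exponent (on the $y$ side) and into the $(\log n)^{O(1)}$ factor (on the $\log n$ side), and that the iteration described above inflates the expected total running time by at most a polynomial factor in $\log n$. Once these accounting steps are in place, summing the per-call cost across the $O(\log n)$ primitive calls yields the advertised $\exp\((\log y)^{2/3+o(1)}\)(\log n)^{O(1)}$ bound.
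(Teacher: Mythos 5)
Your proposal matches the paper's justification exactly: the lemma is stated there without proof, presented solely as ``a simplified version'' of Theorem~1.1 of Lenstra--Pila--Pomerance~\cite{LPP1}, and your first paragraph correctly identifies that citation and notes that the subdominant $(\log\log y)^{1/3}$ factor in the LPP bound is absorbed into the $o(1)$ in the exponent $(\log y)^{2/3+o(1)}$. The bootstrapping argument in your second paragraph (iterating a ``find one small prime factor'' primitive with trial division and confidence amplification) is sound but not needed, since the cited theorem already returns the full set of primes $\ell\le y$ dividing $n$ in a single invocation.
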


\subsection{Finding roots of polynomials}

We also need the following factorisation algorithm from~\cite{BKS}.

\begin{lemma}\label{lem:factor}
There is a deterministic algorithm that,
given a square-free polynomial $f \in \F_p[X]$ of degree $d$
that splits completely into linear factors in $\F_p[X]$,
finds a root of $f$ in $\widetilde{O}_p(d+p^{1/2})$ time.
\end{lemma}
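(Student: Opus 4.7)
The plan is to combine fast polynomial arithmetic modulo $f$ with a baby-step giant-step enumeration of $\F_p$. I would proceed in two phases.

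Phase 1 (verification). Compute $X^p \bmod f(X)$ by repeated squaring; this takes $O(\log p)$ modular multiplications, each of cost $\widetilde{O}(d)$ via FFT-based arithmetic, for a total of $\widetilde{O}_p(d)$. The identity $X^p \equiv X \pmod{f(X)}$ confirms that every root of $f$ lies in $\F_p$ (and if it fails, we can abort, since the input hypothesis would be violated).

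Phase 2 (localization via baby-step giant-step). Set $L := \lceil p^{1/2}\rceil$ and partition $\{0,1,\dots,p-1\}$ into the $L$ intervals $I_b := [bL,\,bL+L)$ for $0\le b<L$. Build the baby-step polynomial
\[
g(Y) := \prod_{a=0}^{L-1}(Y - a)
\]
of degree $L$ via a subproduct tree in $\widetilde{O}(L)=\widetilde{O}_p(p^{1/2})$ time. A root of $f$ lies in $I_b$ precisely when it is a common root of $f(X)$ and $g(X - bL)$, so the gcd $h_b := \gcd(f(X),\,g(X - bL))$ captures the roots of $f$ in $I_b$. Since $f$ has at least one root in $\F_p$, some $h_b$ is nontrivial, after which a single multipoint evaluation of $h_b$ at the $L$ consecutive integers of $I_b$ exposes an explicit root in $\widetilde{O}(L)=\widetilde{O}_p(p^{1/2})$ time.

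The main obstacle is to produce the family $\{g(X - bL)\bmod f(X) : 0\le b<L\}$ in total time $\widetilde{O}_p(d + p^{1/2})$, rather than the $\widetilde{O}_p(L(d+L)) = \widetilde{O}_p(p + p^{1/2}d)$ that $L$ independent reductions would cost. This is the technical core: one has to exploit the fact that the shifts $X-bL$ depend linearly (and hence very rigidly) on $b$, together with a deterministic near-linear-time modular-composition subroutine (in the spirit of Kedlaya--Umans) applied to the single polynomial $g$, so as to output the required reductions as a structured batch rather than one at a time. Assuming such an amortized modular composition is available, combining Phases 1 and 2 yields the claimed $\widetilde{O}_p(d + p^{1/2})$ bound; without it, one would be forced to settle for the weaker bound $\widetilde{O}_p(p + p^{1/2}d)$, which is the natural benchmark that has to be beaten.
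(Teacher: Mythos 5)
The paper does not prove this lemma; it cites the deterministic root-finding algorithm of Bourgain, Konyagin, and Shparlinski~\cite{BKS} without further argument, noting only that it improves Shoup's method when $d$ grows as a power of~$p$. Your attempt supplies an independent baby-step giant-step argument, but the gap you flag at the end is not a loose end to be filled in -- it is where the approach fails. The family $\{g(X-bL)\bmod f(X) : 0\le b<L\}$ consists of $L\approx p^{1/2}$ residues each of degree up to $d-1$, so merely writing them down already costs on the order of $Ld\approx p^{1/2}d$ field operations, exceeding the target $\widetilde{O}_p(d+p^{1/2})$ by roughly a factor of $\min(d,p^{1/2})$; no amortized modular-composition device, Kedlaya--Umans included, can beat an output-size lower bound. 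Trying only to \emph{detect} which block $[bL,(b+1)L)$ contains a root of $f$, without producing the residues, does not help either: multipoint evaluation of $f$ over all of $\F_p$ takes $\widetilde{O}_p(p)$; forming $F(c)=\prod_{a=0}^{L-1}f(a+c)$ and evaluating it at $c=0,L,\dots,(L-1)L$ takes $\widetilde{O}_p(dp^{1/2})$; and a binary search over blocks already requires reducing a polynomial of degree about $p/2$ modulo $f$ at its first step. All of these are strictly worse than $\widetilde{O}_p(d+p^{1/2})$ throughout the range $1\ll d\ll p^{1/2}$.

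The algorithm of~\cite{BKS} proceeds along a genuinely different line. It refines Shoup's splitting strategy, computing $\gcd(f(X),\,H(X)^{(p-1)/2}-1)$ for suitable polynomials $H$ built as products of shifted linear forms, and the crucial $p^{1/2}$ term is analytic in origin: it comes from Weil/Burgess-type bounds for double character sums, which guarantee that a splitting choice of $H$ exists within a family of size about $p^{1/2}$ while the powering work is amortized across the family. The $p^{1/2}$ there is a character-sum threshold, not the side length of a search grid. Your Phase~1 (checking $X^p\equiv X\pmod{f}$) is correct but redundant given the hypothesis on $f$; your Phase~2, as written, cannot be completed within the stated time bound, so the proposal does not establish the lemma.
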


The algorithm of Lemma~\ref{lem:factor} improves that 
of Shoup~\cite{Shoup} when $d$ grows as a power of $p$, which 
is exactly the case we need.

\subsection{Counting rational points on elliptic curves over finite fields}

We recall the classical result of Schoof~\cite{Schoof},
which is quite sufficient for our purposes (although we use it 
only for prime fields $\F_p$, we state it in full generality). 

\begin{lemma}
\label{lem:Schoof}
There is a deterministic algorithm that,
given an elliptic curve  $E/\F_q$, outputs
the cardinality $N = \#E(\F_q)$ in $(\log q)^{O(1)}$ time.
\end{lemma}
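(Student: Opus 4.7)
The plan is to recall the structure of Schoof's algorithm and justify its polynomial-time complexity. By Hasse's theorem, the trace of Frobenius satisfies $|t|\le 2\sqrt{q}$, so it suffices to determine $t$ modulo an integer $M > 4\sqrt{q}$. The strategy is to compute $t_\ell := t \bmod \ell$ for each prime $\ell$ in a set $S$ of small primes (with $\ell \ne \operatorname{char}\F_q$) chosen so that $\prod_{\ell \in S}\ell > 4\sqrt{q}$, and then recover $t$ via the Chinese Remainder Theorem. By the prime number theorem, one can take all primes $\ell$ up to some bound $L = O(\log q)$, which gives $|S| = O(\log q / \log \log q)$ primes.

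To compute $t_\ell$, I would use the characteristic equation of the $q$-power Frobenius endomorphism $\pi_q$, namely $\pi_q^2 - [t]\pi_q + [q] = 0$ in $\operatorname{End}(E)$. Restricting to the $\ell$-torsion subgroup $E[\ell]$, this becomes
\[
\pi_q^2(P) + [q \bmod \ell]\, P = [t_\ell]\,\pi_q(P) \qquad \text{for all } P \in E[\ell].
\]
The key computational idea is to represent a generic $\ell$-torsion point symbolically by working inside the quotient ring
\[
R_\ell := \F_q[X,Y]/\bigl(Y^2 - X^3 - aX - b,\ \psi_\ell(X)\bigr),
\]
where $\psi_\ell(X)$ is the $\ell$-th division polynomial, whose roots are the $x$-coordinates of nontrivial $\ell$-torsion points; it has degree $(\ell^2-1)/2$. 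Powers of $\pi_q$ are computed by fast modular exponentiation $X^q, Y^q, X^{q^2}, Y^{q^2}$ in $R_\ell$, and then one simply tests, for $t_\ell = 0, 1, \dots, \ell-1$, whether the above relation holds in $R_\ell$; the correct value exists and is unique because the equation holds in $\operatorname{End}(E)$.

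The cost analysis is the step to be careful about. Arithmetic in $R_\ell$ costs $\widetilde O(\ell^2)$ operations in $\F_q$. The Frobenius powers require $O(\log q)$ multiplications in $R_\ell$, so computing $t_\ell$ costs $\widetilde O(\ell^2 \log q)$ operations in $\F_q$, hence $(\log q)^{O(1)}$ bit operations once $\ell = O(\log q)$. Summing over $|S| = O(\log q)$ primes and adding the negligible cost of CRT keeps the total at $(\log q)^{O(1)}$, yielding Lemma~\ref{lem:Schoof}. The main obstacle I would need to address carefully is proving correctness and cost of the division polynomial arithmetic: one must show $\psi_\ell$ can be computed by the standard recursion in time polynomial in $\ell$ and $\log q$, and handle degenerate cases where $\pi_q^2(P) \pm [q]P$ or $\pi_q(P)$ reduces to a torsion point of smaller order (requiring GCD computations with factors of $\psi_\ell$); these edge cases only split the computation and do not affect the asymptotic bound.
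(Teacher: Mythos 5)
Your proposal is a correct sketch of Schoof's original algorithm, which is exactly what the paper cites for this lemma (the paper gives no proof, just a reference to Schoof's 1985/1995 work). The outline — determining $t \bmod \ell$ via the Frobenius characteristic equation computed symbolically in $\F_q[X,Y]/(Y^2-X^3-aX-b,\psi_\ell(X))$ for primes $\ell = O(\log q)$, then combining by CRT — together with the cost accounting and the acknowledgment of degenerate cases, faithfully reproduces the cited argument.
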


\subsection{Computing Hilbert class polynomials}

For computing Hilbert class polynomials deterministically, we rely on the complex analytic approach of Enge~\cite{Enge}, which uses floating point approximations of complex numbers, combined with a rigorous bound on the precision needed to control rounding errors due to Streng; see~\cite[Remark~1.1]{Streng}.

\begin{lemma}
\label{lem:HD}
There is a deterministic algorithm that,
given an imaginary quadratic discriminant $D$,
outputs $H_D(x)$ in $|D|^{1+o(1)}$ time.
\end{lemma}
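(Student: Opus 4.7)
The plan is to follow the complex analytic approach of Enge, invoking Streng's precision bound to certify correctness. Over $\C$ one has the factorization
$$
H_D(X) = \prod_{[a,b,c]} \bigl(X - j(\tau_{a,b,c})\bigr),
$$
where the product ranges over representatives of the reduced binary quadratic forms of discriminant $D$ and $\tau_{a,b,c} = (-b+\sqrt{D})/(2a)$ lies in the upper half plane. The algorithm will compute the right-hand side in floating point to just enough precision to recover the integer coefficients on the left by rounding.

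First, I would enumerate the reduced forms. Their number is $h(D) = O(|D|^{1/2}\log|D|)$ by Schur's bound, and one can list them in $|D|^{1/2+o(1)}$ time by sweeping over pairs $(a,b)$ with $|b|\le a \le \sqrt{|D|/3}$ and checking the discriminant condition. Next, for each reduced form compute a floating-point approximation to $j(\tau_{a,b,c})$ at working precision $P = O(|D|^{1/2}(\log|D|)^2)$ bits. Because $\mathrm{Im}(\tau_{a,b,c}) \ge \sqrt{|D|}/(2a)$ for reduced forms, the nome $q = e^{2\pi i \tau}$ is geometrically small, so the truncated $q$-expansion of $j$ needs only $\widetilde{O}(1)$ terms, and each evaluation costs $\widetilde{O}(P)$ bit-operations using fast multiplication of $P$-bit numbers.

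Third, assemble the product $\prod_{[a,b,c]}\bigl(X - j(\tau_{a,b,c})\bigr)$ using a balanced subproduct tree. With FFT-based polynomial multiplication the tree has total cost $\widetilde{O}(h(D)\cdot P)$, which also dominates the total cost of the $j$-evaluations. Substituting $h(D) = |D|^{1/2+o(1)}$ and $P = |D|^{1/2+o(1)}$ yields the claimed $|D|^{1+o(1)}$ bound, which is essentially optimal since the output already has $|D|^{1+o(1)}$ bits. Finally, round each coefficient of the resulting floating-point polynomial to the nearest integer; by the choice of $P$ this reproduces the true integer coefficients of $H_D(X)$.

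The main obstacle is the precision analysis: one must justify that $P = O(|D|^{1/2}(\log|D|)^2)$ bits is enough to absorb all rounding error accumulated through the $h(D)$ evaluations of $j$ and through the $O(\log h(D))$ levels of the subproduct tree, so that the final rounding step is provably correct. This is exactly what Streng supplies in \cite{Streng}, and is the step I would rely on as a black box rather than redevelop from scratch.
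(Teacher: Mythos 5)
Your proposal matches the paper's treatment: the paper establishes this lemma simply by citing Enge's complex analytic algorithm together with Streng's rigorous precision bound (referenced as \cite[Remark~1.1]{Streng}), which is precisely the combination you invoke. Your additional algorithmic detail (form enumeration, $q$-expansion evaluation, subproduct tree, rounding) is an accurate unpacking of what Enge's algorithm does, but the paper does not reprove any of it and neither do you, so the two are essentially the same argument.
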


\section{Proofs of Main Results}

\subsection{Proof of Theorem~\ref{thm:Card-M2M}}
Let
$$
y= \exp\left((\log p)^{3/5}\right).
$$
We choose a pair $(a,b)\in \F_p^2$ uniformly at random,
and if $4a^3+27b^2\ne 0$, we
compute the cardinality
$N = \#E_{a,b}(\F_p)$ in $(\log p)^{O(1)}$ time, via
Lemma~\ref{lem:Schoof}. 
We then use the probabilistic algorithm of 
Lemma~\ref{lem:SmoothFact} to find all the
prime divisors $\ell \le y$ of $N$ in
$$
T_1 = \exp\((\log y)^{2/3 + o(1)}\) (\log p)^{O(1)}
$$
expected time, and we can easily determine the largest power
of each of the primes $\ell$ that divides $N$ within the same time bound,
using repeated divisions by $\ell$.

One can check that for the above choice of $y$ the conditions 
of Lemma~\ref{lem:SmoothInt}  are satisfied with $x=M$ and $z = p$.
Hence, by Lemmas~\ref{lem:SmoothInt} and~\ref{lem:Len}, after an expected
$$
T_2 = u^{u + o(u)} (\log p)^{4} = \exp\( u^{1+o(1)}\)
$$ 
random 
choices of pairs $(a,b)\in \F_p^2$, where 
$$
u = \frac{\log M}{\log y},
$$ 
we find a pair $(a,b) \in \F_p^2$ for which 
$N = \#E_{a,b}(\F_p)$ has a $y$-smooth factor $m \in [M,2M]$ 
and also has $\tau(k) \le u^{u + o(u)} (\log p)^{3}$
integer divisors. 
By exhaustively checking every $y$-smooth divisor of $N$
(constructed as products of powers of prime divisors $\ell\le y$ of $N$),
for any given $N$ we can deterministically find such an $m$ 
(or determine that none exists) in time 
$$
T_3 = u^{u + o(u)} (\log p)^{O(1)} = \exp\( u^{1+o(1)}\) .
$$

This leads to a total expected running time of
$$
T_1T_2T_3 
= \exp\((\log y)^{2/3 + o(1)} + u^{1+o(1)}\) (\log p)^{O(1)}.
$$
Recalling the choice of $y$, we conclude the proof. $\qed$

\subsection{Proof of Theorem~\ref{thm:Card-m GRH}}

We let $V=p^{1/4}/(2m^{1/2})$. 
Since,  $m =o\(p^{1/2} (\log p)^{-4}\))$,
we see that if $p$ is  sufficiently large, then $V$ satisfies the 
condition of   Corollary~\ref{cor:QuadRes}.
 Combining Corollary~\ref{cor:QuadRes} 
with the deterministic primality 
test of~\cite{AKS}, we see that in time $Vp^{o(1)}$ 
we can find a prime $v\in [V, 2V]$ for which $p$ is a 
quadratic residue.  Thus the congruence $4p\equiv x^2 \pmod v$ 
has  a solution that can also be found in time  $Vp^{o(1)}$ 
using brute force search. Via Hensel lifting, we can 
now find a solution $s$ to the congruence
$$4p\equiv x^2 \pmod {v^2},$$
with $0 \le s \le v^2-1$, in time $Vp^{o(1)}$; see~\cite{vzGG}. 

Any admissible value of $t$ must satisfy the 
congruences 
$$
t \equiv s \pmod {v^2} \mand t \equiv p+1 \pmod {m}.
$$
Using the Chinese remainder theorem, in time $p^{o(1)}$
we can find an integer~$a$ with
$0 \le a \le mv^2-1$, such that the above system of congruences is
equivalent to the single congruence $t \equiv a \pmod {mv^2}$. 
Since $mv^2 \le 16mV^2 = 4p^{1/2}$,
 there is a
$t \in [-2p^{1/2}, 2p^{1/2}]$ that satisfies this congruence
(either $a$ or $a-mv^2$ must lie in the desired interval).

We now bound the complexity of constructing an elliptic
curve $E/\F_p$ with $\# E(\F_p)=p+1-t$ for our chosen value of $t$.

Let us write $t^2-4p = u^2D$, for an integer $u$
and a fundamental discriminant $D< 0$.
Then $u \ge v \ge V$, and therefore $|D| \le 4p/V^2$. 
By Lemma~\ref{lem:HD}, we can construct the
Hilbert class polynomial $H_D(x)$ of 
degree
$$
h(D) = \widetilde{O}_p(|D|^{1/2}) = \widetilde{O}_p(m^{1/2}p^{1/4}) 
$$
in time $\widetilde{O}_p(|D|) = \widetilde{O}_p(mp^{1/2})$. The result
now follows from Lemma~\ref{lem:factor}. $\qed$

\subsection{Proof of Theorem~\ref{thm:Card-m Aver}}
We can assume that $m = o(T^{1/2} (\log T)^{-2})$ as otherwise 
the bound is trivial. 
We set $V = T^{1/4}/m^{1/2}$ and discard 
$$
O(T V^{-1} \log V + V\log V) = O(mT^{1/2} \log T)
$$
primes $p\le T$, as described in  Lemma~\ref{lem:QuadRes Aver}. 

For each of the remaining primes we can find a prime $v \in [V, 2V]$ with 
$$
\(\frac{p}{v}\) = 1. 
$$
We also note that $mv^2 \le 4mV^2 = 4T^{1/2} \le 4p^{1/2}$
for every prime $p \in [T,2T]$. 
After this the proof is identical to that of Theorem~\ref{thm:Card-m GRH}.  $\qed$

\section{Possible Extensions and Generalisations}

We may also consider a heuristic version of
Theorem~\ref{thm:Card-M2M} which uses the elliptic 
curve factorisation method of Lenstra~\cite{Len}. 
This leads to the heuristic complexity bound
\begin{equation*}
\begin{split}
\exp\((\log y)^{1/2 + o(1)}\) & u^{2u + o(u)}(\log p)^{O(1)}\\
&= \exp\((\log y)^{1/2 + o(1)} + u^{1+o(1)}\) (\log p)^{O(1)},
\end{split}
\end{equation*}
which after the choice 
$$
y= \exp((\log p)^{2/3}))
$$
leads to roughly the same expected running time $\exp\((\log p)^{1/3+o(1)}\)$
as the \emph{number field sieve}, the heuristically fastest integer factorisation algorithm; see~\cite{CrPom} for more details.   

As an analog of Theorem~\ref{thm:Card-M2M}, one can also consider the case where the integer $m$ is fixed and the prime $p$ is allowed to vary over an interval $[P,2P]$, for some real $P > m^{1+\varepsilon}$ and a fixed $\varepsilon > 0$. 
If we pick a multiple $N$ of $m$ that lies in the interval
$$
[P+1+2\sqrt{P},\ 2P+1-2\sqrt{2P}],
$$ we can apply the algorithm of Br\"oker and Stevenhagen~\cite{BrSt} to construct an elliptic curve $E/\F_p$ for which $\#E(\F_p)=N$ is a multiple of~$m$; the bounds on $N$ ensure that $p\in [P,2P]$.  The heuristic expected running time of this probablistic algorithm is
$(2^{\omega(N)}\log N)^{O(1)}$, where $\omega(N)$ denotes the number of distinct prime divisors of $N$.  We have a fair amount of freedom in the choice of $N$ and can easily choose~$N$ so that we have $\omega(N)=\omega(m)+1$; this allows us to write the time bound as $(2^{\omega(m)}\log P)^{O(1)}$.
For almost all integers $m$ we have $\omega(m)=O(\log \log P)$, in which case we obtain a heuristic polynomial-time algorithm.

The algorithms of Theorems~\ref{thm:Card-m GRH} 
and~\ref{thm:Card-m Aver} can easily be extended to produce elliptic curves 
$E$ with $\# E(\F_p)$ in a given residue class modulo~$m$.

Finally, we note that our approach can be used to construct elliptic 
curves $E$ over $\F_p$ for which the group $E(\F_p)$ contains a 
prescribed subgroup. By a classical result of Waterhouse~\cite{Wa},
for a curve $E$ over $\F_p$ with $N=\#E(\F_p)$ 
all subgroups of $E(\F_p)$ are isomorphic to 
subgroups of the form 
$$
\cG_{r,s} := (\Z/r\Z) \times (\Z/rs\Z)
$$
for some positive integers $r$ and $s$ with 
$$
r\mid p-1 \mand r^2s  \mid  N.
$$
Furthermore, by results of R\"uck~\cite{Ru} and Voloch~\cite{Vol},
the above divisibilities and the condition 
$N\in [p+1-2\sqrt{p},\ p+1+2\sqrt{p}]$ are sufficient 
for the existence of a curve $E$ over $\F_p$ with 
$\cG_{r,s} \subseteq E(\F_p)$ and $\#E(\F_p)=N$, provided that $rs$ is not divisible by $p$.
This last requirement certainly holds if we require $m=r^2s$ to be less than $p$, which we do, since we are only considering $m=o(p^{1/2}/(\log p)^2)$.

To design an efficient algorithm to construct such curves, one first
obtains an analogue of Lemma~\ref{lem:QuadRes Aver} with $m = r^2s$ 
and primes $p\in [T, 2T]$ from the arithmetic progression 
$p \equiv 1 \pmod r$, which involves the same analytic tool
combined with results about primes in arithmetic 
progressions.
Given a prime $p\equiv 1\pmod r$ and a nonzero integer $t$
with $N=p+1-t$ divisible by $m=r^2s$ such that $4p=t^2-v^2D$ with
$D=\disc(\Q(\sqrt{t^2-4p})$,
we then proceed as before.  We compute the Hilbert class polynomial $H_D$,
find a root $j$ of $H_D\in \F_p[x]$, and determine a curve $E_{a,b}/\F_p$ that
has this $j$-invariant and is in the correct $\F_p$-isomorphism class so
that the trace of its Frobenius endomorphism $\pi$ is equal to $t$.
Then $E_{a,b}(\F_p)$ contains a subgroup isomorphic to the prescribed group $\cG_{r,s}$, as we now argue.

Since $D$ is the discriminant of $K=\Q(\sqrt{t^2-4p})$, the endomorphism ring of~$\mathrm{End}(E_{a,b})$ is
isomorphic to the maximal order $\cO_K$.
By applying \cite[Lemma 2]{Ru}, we can write $\pi-1=\ell^a\omega$,
with $\omega\in\cO_K=\mathrm{End}(E_{a,b})$ and
$$
a = \left\{v_\ell(p-1),v_\ell(N)/2\right\}.
$$
It then follows from \cite[Lemma 1]{Ru} that $E_{a,b}(\F_p)$ contains a
subgroup isomorphic to
$$
\Z/\ell^a\Z\times \Z/\ell^b\Z,
$$
where $b=v_\ell(N)-a$.
By construction, we have $a\ge v_\ell(r)$ and $b\ge v_\ell(rs)$,
and it follows that $E_{a,b}(\F_p)$ contains a subgroup isomorphic to the
$\ell$-Sylow subgroup of $\cG_{r,s}$.
Since this holds for all primes $\ell$, we see that
$E_{a,b}(\F_p)$ contains a subgroup isomorphic to $\cG_{r,s}$ as claimed.

\section{Some Facts About Primes in Arithmetic Progressions}

We now present several facts that shed some light on the 
frequency of pairs $(m,p)$ with $p \equiv \pm 1 \pmod m$,
which is important for better understanding the the advantages
of using elliptic curves for constructing polynomial maps with 
many collisions. 

For any fixed $m$ this is essentially a 
result about the distribution of primes in 
arithmetic progressions. In particular, the standard proof 
of Linnik's theorem on the smallest prime in an arithmetic 
progression implies that there is an absolute constant 
$K> 0$ such that  for any integer $m\ge 2$, for all $T \ge m^K$ 
there exists a prime $p\in [T,2T]$ in any admissible residue
class modulo $m$; see~\cite[Theorem~18.6]{IwKow}. 
It would be interesting to see what 
the currently strongest  approaches to estimates 
of the {\it Linnink constant\/} $L$ of Heath-Brown~\cite{H-B}
(with $L \le 5.5$), and of 
T. Xylouris~\cite{Xyl} (with $L \le 5.18$), give for the
above constant $K$.

We also note that, by a result of Mikawa~\cite{Mik}, for any
sufficiently large $M$, for all but $o(M)$ integers 
$m \in [M,2M]$, for any $K > 32/17$ and $T > M^K$  
there exists a prime $p\in [T,2T]$ with  $p\equiv 1 \pmod m$ 
(and also with $p\equiv -1 \pmod m$). The classical 
Bombieri-Vinogradov Theorem~\cite[Theorem~17.1]{IwKow}
gives similar results for $K > 2$. 

Finally, we note that several results of  Ford~\cite{Ford}
can also provide some information on the existence and distribution
of pairs $(m,p)$ with $p \equiv \pm 1 \pmod m$. 
For example, a combination of~\cite[Corollary~2]{Ford}
and~\cite[Theorem~6]{Ford} implies that as both $M$ and $T/M$ tend to infinity,
there are only $o(T/\log T)$ primes $p \in [T,2T]$ such that 
$p-1$ has a divisor $m \in [M,2M]$. On the other hand, by a 
slight modification of~\cite[Theorem~7]{Ford}, 
for any $\beta> \alpha>0$, there are at least $cT/\log T$
primes  $p \in [T,2T]$ such that 
$p-1$ has a divisor $m \in [T^\alpha,T^\beta]$.



\section*{Acknowledgement}

The authors would like to thank Jung-Hee Cheon
for very useful comments and Michael Zieve for providing
precise 
information about value sets of Dickson polynomials. 
The authors are also very grateful to the referee 
for a careful reading of the manuscript. 

During the preparation of this paper  I.~E.~Shparlinski was supported in part
by ARC grant DP130100237 and
A.~V.~Sutherland received financial support from NSF grant DMS-1115455.

\end{document}